\newcommand\NN{\mathrm{I\!N}}
\newcommand\CC{\mathbb{C}}
\newtheorem{theorem}{Theorem}[section]
\newtheorem{definition}[theorem]{Definition}
\newtheorem{corollary}[theorem]{Corollary}
\newtheorem{example}[theorem]{Example}
\newtheorem{lemma}[theorem]{Lemma}
\newtheorem{remark}[theorem]{Remark}
\newtheorem{proposition}[theorem]{Proposition}
\newcommand{\address}{Address: Department of Mathematics, University of North Texas, 1155 Union Circle \#311430, Denton, TX 76203-5017, USA; E-mail: kiko.kawamura@unt.edu, andrew.allen@unt.edu}
\numberwithin{equation}{section}
\title{Revolving Fractals}
\author{Kiko Kawamura and Andrew Allen \\University of North Texas \footnote{\address}}
\begin{document}

\maketitle

\begin{abstract}
Davis and Knuth in 1970 introduced the notion of revolving sequences, as representations of a Gaussian integer. Later, Mizutani and Ito pointed out a close relationship between a set of points determined by all revolving sequences and a self-similar set, which is called the Dragon. We will show how their result can be generalized, giving new parametrized expressions for certain self-similar sets. 
\end{abstract}

\section*{Introduction}

In 1970, C.~Davis and D.~E.~Knuth~\cite{Davis+Knuth-1970} introduced the notation of {\it revolving representations} of a Gaussian integer: for any $z=x+iy$ with $x, y \in \mathbb{Z}$,  there exists a revolving sequence $(\delta_{0}, \delta_{1},\dots \delta_{n})$ such that  
\begin{equation*}
  z=\sum_{k=0}^{n} \delta_{n-k}(1+i)^{k}, 
\end{equation*}
where $\delta_{k} \in \{0, 1, -1, i, -i\}$ with the restriction that the non-zero values must follow the cyclic pattern from left to right:
$$\cdots \to 1 \to (-i) \to (-1) \to i \to 1 \to \cdots$$

For instance, they gave the following example:
$$-5+33i = (1 \; 0 \; 0 \; 0 \; (-i) \; (-1) \; i \; 1 \; 0 \; (-i) \; 0)_{1+i}.$$

They also showed that each Gaussian integer has exactly four representations of this type: one each in which the right-most non-zero value takes on the values $1, -1, i, -i$.

\begin{figure}
\begin{center}
\includegraphics[height=4.5cm,width=4.5cm, bb=0 0 400 400]{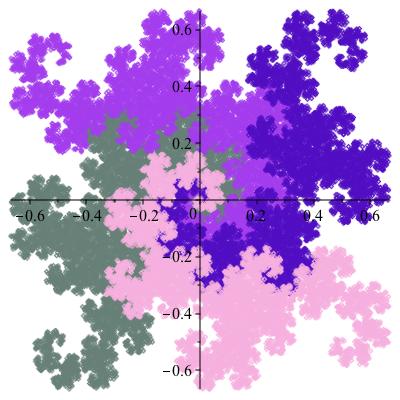} 
\qquad 
\includegraphics[height=4.5cm,width=4.5cm, bb=0 0 400 400]{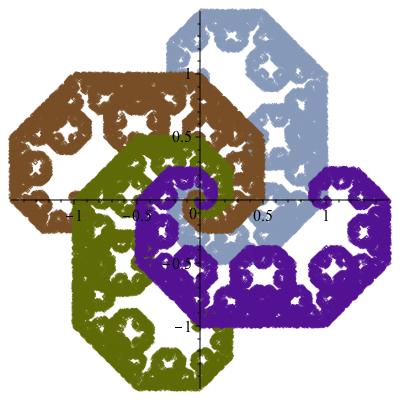}
\end{center}
\caption{$X$ (left) and $X^{*}$ (right)}
\label{figure1}
\end{figure}
\medskip

Let $W$ be the set of all revolving sequences, and define the set 
\begin{equation*}
  X:=\left\{ \sum_{n=1}^{\infty} \delta_{n}(1+i)^{-n}: (\delta_{1}, \delta_{2}, \delta_{3},\dots )\in W \right\}.
\end{equation*}
Notice that each revolving sequence determines a complex number and $X$ is a set of points in the complex plane. The set $X$ is shown in the left half of Figure \ref{figure1}. Mizutani and Ito~\cite{Ito-1987} proved the following theorem using techniques from symbolic dynamics:

\begin{theorem}[Mizutani-Ito, 1987]~\\ 
(i) The set $X$ is tiled by four Dragons $\{D_k, k=0,1,2,3\}$, that is 
\begin{equation*}
X=\bigcup_{k=0}^{3}D_{k}=\bigcup_{k=0}^{3} i^k D, 
\end{equation*}
where $D=\psi_1(D)\cup\psi_2(D)$ is the self-similar set generated by
\begin{equation*}
\label{rev-dragon}
\begin{cases}
 \psi_1(z)=(\frac{1-i}{2})z,& \\
 \psi_2(z)=(\frac{-1-i}{2})z + \frac{1-i}{2}.&
\end{cases}
\end{equation*}
~\\
(ii) \begin{equation*}
\lambda(D_k \cap D_{k^{'}})=0, \mbox{ for each $k \neq k^{'}.$}
\end{equation*} 
\end{theorem}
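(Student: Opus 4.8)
The plan is to prove (i) by turning the defining revolving condition into an explicit coding of the attractor $D$, and then to reduce (ii) to a single separation (tiling) statement via an elementary measure identity. For (i), set $a=(1+i)^{-1}=\tfrac{1-i}{2}$, so that $\psi_1(z)=az$ and $\psi_2(z)=-iaz+a$ (note $\tfrac{-1-i}{2}=-ia$ and $\tfrac{1-i}{2}=a$). By Hutchinson's theorem $D$ is the image of the code space $\{1,2\}^{\mathbb N}$ under $\pi(\omega)=\lim_{n}\psi_{\omega_1}\circ\cdots\circ\psi_{\omega_n}(0)$. Expanding the composition and keeping only the indices with $\omega_n=2$ (the only ones contributing a translation term), I would obtain
\begin{equation*}
\pi(\omega)=\sum_{n:\,\omega_n=2}(-i)^{m_n}(1+i)^{-n},\qquad m_n=\#\{\,j<n:\omega_j=2\,\}.
\end{equation*}
Written as $\sum_n\delta_n(1+i)^{-n}$ with $\delta_n=(-i)^{m_n}$ when $\omega_n=2$ and $\delta_n=0$ otherwise, the nonzero coefficients run through $1,-i,-1,i,\dots$ — precisely the revolving pattern whose first nonzero value is $1$.

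The rule $\omega_n=2\iff\delta_n\neq0$ is then a bijection between $\{1,2\}^{\mathbb N}$ and the revolving sequences with first nonzero entry $1$ (the constant code $(1,1,\dots)$ giving the zero sequence, i.e.\ the point $0$), so $D$ is exactly the set $D_0$ of such revolving sums. Multiplication of every entry by $i^k$ is a bijection of $W$ that preserves the cyclic rule and sends first nonzero value $1$ to first nonzero value $i^k$; hence $i^kD$ is the class with first nonzero value $i^k$. Since every revolving sequence is either identically zero or has a first nonzero entry in $\{1,i,-1,-i\}=\{i^k:0\le k\le3\}$, these four classes exhaust $W$, giving $X=\bigcup_{k=0}^3 i^kD$. (As a consistency check, splitting a sum according to $\delta_1=0$ or $\delta_1=1$ recovers $D=\psi_1(D)\cup\psi_2(D)$.)

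For (ii) I would first record the graph-directed self-affine identities obtained by the same splitting, but after multiplying by the expanding factor $1+i$:
\begin{equation*}
(1+i)D_k=D_k\ \cup\ \bigl(i^k+D_{k-1}\bigr)\quad(k\bmod 4),\qquad (1+i)X=X\cup(1+D_3)\cup(i+D_0)\cup(-1+D_1)\cup(-i+D_2).
\end{equation*}
Since multiplication by $1+i$ scales planar Lebesgue measure $\lambda$ by $|1+i|^2=2$, if the five sets on the right of the second identity overlap pairwise in $\lambda$-null sets then $2\lambda(X)=\lambda(X)+4\lambda(D)$, whence $\lambda(X)=4\lambda(D)=\sum_{k=0}^3\lambda(i^kD)$. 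Combined with $X=\bigcup_k i^kD$, the elementary principle that $\lambda\bigl(\bigcup_kA_k\bigr)=\sum_k\lambda(A_k)$ for finitely many finite-measure sets forces $\lambda(A_k\cap A_{k'})=0$ for $k\neq k'$ — which is exactly conclusion (ii) (the case $\lambda(D)=0$ being trivial).

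The main obstacle is therefore the separation input: that the copies of $D$ on the right-hand sides meet only in $\lambda$-measure zero. I would obtain this by showing the dragon is a self-affine tile, i.e.\ that the open set condition holds, by exhibiting a bounded open set $U$ with $\psi_1(U)\cup\psi_2(U)\subseteq U$ and $\psi_1(U)\cap\psi_2(U)=\varnothing$. Under the open set condition the attractor has Hausdorff dimension equal to the similarity dimension $2$, so $0<\lambda(D)<\infty$, its boundary is $\lambda$-null, and the inflation identity $(1+i)D_k=D_k\cup(i^k+D_{k-1})$ becomes a genuinely measure-disjoint tiling rule that the five-piece identity for $X$ inherits. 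Producing such a $U$ explicitly — or, equivalently, proving directly that the set of revolving sums admitting two distinct first nonzero values is a lower-dimensional self-similar set (governed by a finite boundary automaton) and hence $\lambda$-null — is the technical heart of the argument.
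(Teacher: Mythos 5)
Your treatment of part (i) is correct and complete, and it is close in spirit to what the paper does: the paper recovers (i) as the special case $\alpha=\tfrac{1-i}{2}$, $\theta=-\pi/2$ of Theorem~\ref{th:main1}, whose proof isolates the same subset (revolving sums with first nonzero digit $1$), shows it is compact (Lemma~\ref{lemma:X1-closed}) and satisfies the Hutchinson set equation (Proposition~\ref{prop-X1}), and then invokes uniqueness of the attractor. You instead compute the address map $\pi(\omega)=\sum_{k:\omega_k=2}(-i)^{m_k}(1+i)^{-k}$ explicitly and read off the revolving coding directly; the computation is right ($c_2=-ia$ contributes one factor of $-i$ per past occurrence of the symbol $2$, which reproduces the cycle $1\to -i\to -1\to i$), and the decomposition of $W$ by the value of the first nonzero digit is exactly the paper's decomposition $X=\bigcup_l (e^{i\theta})^l X_{1,\alpha,\theta}$. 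Either route is fine; yours trades the uniqueness theorem for a direct identification of $\pi(\{1,2\}^{\NN})$ with $D_0$.

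Part (ii) has a genuine gap, which you partly acknowledge but understate. The measure-counting step is sound: \emph{if} the five pieces in $(1+i)X=X\cup(1+D_3)\cup(i+D_0)\cup(-1+D_1)\cup(-i+D_2)$ are pairwise $\lambda$-essentially disjoint, then $\lambda(X)=4\lambda(D)$ and (ii) follows by inclusion--exclusion. But that hypothesis is not delivered by your proposed input. The open set condition for the single IFS $\{\psi_1,\psi_2\}$ gives $\lambda(D)>0$ and $\lambda(\psi_1(D)\cap\psi_2(D))=0$, hence the essential disjointness of the two pieces in each identity $(1+i)D_k=D_k\cup(i^k+D_{k-1})$ taken separately; it says nothing about the cross terms such as $\lambda\bigl(X\cap(1+D_3)\bigr)$ or $\lambda\bigl((1+D_3)\cap(i+D_0)\bigr)$, which involve overlaps between \emph{distinct} dragons and their translates. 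Those cross terms are essentially the content of (ii) itself (indeed $X\supseteq D_3$, so controlling $X\cap(1+D_3)$ already requires knowing how different $D_k$'s sit relative to one another). So the reduction, as written, is circular unless you carry out the boundary-automaton / tiling analysis you defer to the end --- and that analysis is the whole proof. For what it is worth, the paper never proves (ii) either: the Mizutani--Ito theorem is quoted as background, the original proof is via symbolic dynamics, and the paper's own generalization (Theorem~\ref{th:main1}) reproduces only part (i) and drops the measure statement entirely.
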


In the same paper, they mentioned an interesting question. Define another set $X^{*}$ by 
\begin{equation*}
X^{*}:=\left\{ \sum_{n=1}^{\infty} \overline{\delta_{n}}(1+i)^{-n}: (\delta_{1},\delta_{2}, \delta_{3}, \dots )\in W \right\}.
\end{equation*}

Notice that $\overline{\delta_n}$ moves on the unit circle counterclockwise instead of clockwise.  
The set $X^{*}$ is shown in the right half of Figure \ref{figure1}. Computer simulations suggested to Mizutani and Ito that $X^{*}$ must be a union of four L\'evy's curves; however, they could not give a mathematical proof.

\begin{figure}
  	\begin{center}
	\epsfig{file=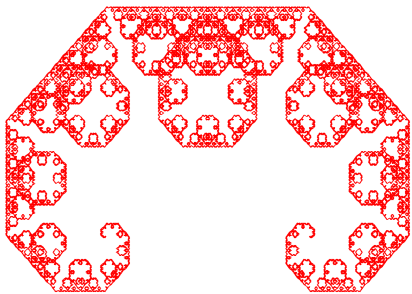, height=1.75in, width=.51\textwidth} 
	\end{center}
  \caption{L\'evy's curve}
	\label{dragon}
\end{figure}

Recall that L\'evy's curve is a continuous curve with positive area. It was introduced by Paul L\'evy in 1939~\cite{Levy-1939}. 
Figure \ref{dragon} shows the graph of L\'evy's curve, which is a self-similar set $L=\phi_1(L)\cup\phi_2(L)$ generated by the similar contractions 
\begin{equation}
\label{eq:levy}
\begin{cases}
 \phi_1(z)=(\frac{1+i}{2}) z,& \\
 \phi_2(z)=(\frac{1-i}{2})z + \frac{1+i}{2}.&
\end{cases}
\end{equation}
\bigskip

Kawamura in 2002~\cite{Kawamura-2002} finally gave a proof of Mizutani and Ito's conjecture. 

\begin{theorem}[Kawamura, 2002]~\\ 
The conjugate of $X^{*}$ is a union of four copies of L\'evy's curves $L$ generated by \eqref{eq:levy}, that is 
\begin{equation*} 
\overline{X^{*}}=\bigcup_{k=0}^{3} i^k L.
\end{equation*}
\end{theorem}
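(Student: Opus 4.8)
The plan is to recognize $\overline{X^{*}}$ as the invariant set of a self-similar system built directly from the revolving-sequence constraint, and then to identify that system with the one producing $\bigcup_{k=0}^{3} i^{k}L$ by appealing to the uniqueness of attractors. The first step is purely formal: conjugating termwise gives
\begin{equation*}
\overline{X^{*}}=\left\{\sum_{n=1}^{\infty}\delta_{n}(1-i)^{-n}:(\delta_{1},\delta_{2},\dots)\in W\right\},
\end{equation*}
since $\overline{(1+i)^{-n}}=(1-i)^{-n}$. The crucial arithmetic fact is that $(1-i)^{-1}=\tfrac{1+i}{2}$, which is exactly the multiplier of $\phi_{1}$ in \eqref{eq:levy}; this is why L\'evy's maps, and not the Dragon's, must govern $\overline{X^{*}}$.

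Next I would encode the revolving constraint as a finite automaton. For each admissible ``last nonzero digit'' $s\in\{1,-i,-1,i\}$ let $A_{s}$ be the compact set of tail-values $\sum_{n\ge1}\delta_{n}(1-i)^{-n}$ of continuations whose first nonzero digit is the cyclic successor $-is$ of $s$, and let $A_{\emptyset}:=\overline{X^{*}}$ be the ``fresh'' set with no constraint. Splitting on the first digit $\delta_{1}$ (either $0$, which returns to the same state, or the forced successor, which advances the state) yields the system
\begin{equation*}
A_{s}=\tfrac{1+i}{2}A_{s}\ \cup\ \tfrac{1+i}{2}\bigl(-is+A_{-is}\bigr),\qquad A_{\emptyset}=\tfrac{1+i}{2}A_{\emptyset}\ \cup\ \bigcup_{s}\tfrac{1+i}{2}\bigl(s+A_{s}\bigr).
\end{equation*}
Each map here has ratio $|\tfrac{1+i}{2}|=1/\sqrt2<1$, so this is a contractive graph-directed system with a unique compact solution; compactness of the $A_{s}$ and $A_{\emptyset}$, as continuous images of the compact shift space of admissible sequences, guarantees that the sets just defined \emph{are} that solution.

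Rotational symmetry finishes the identification. Multiplying every digit by $i$ preserves the revolving pattern and scales values by $i$, so $A_{is}=iA_{s}$ and hence $A_{s}=sA_{1}$. Writing $B:=A_{1}$, the first equation collapses to the genuine two-map iterated function system
\begin{equation*}
B=\tfrac{1+i}{2}B\ \cup\ \Bigl(\tfrac{1-i}{2}B+\tfrac{1-i}{2}\Bigr).
\end{equation*}
A direct check shows that $-iL$ satisfies this, using $L=\phi_{1}(L)\cup\phi_{2}(L)$ together with $(-i)\tfrac{1+i}{2}=\tfrac{1-i}{2}$, so by uniqueness $B=-iL$ and $A_{s}=-isL$. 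Substituting into the equation for $A_{\emptyset}$ and invoking the two identities $\tfrac{1+i}{2}L=\phi_{1}(L)$ and $\tfrac{1+i}{2}(1-iL)=\phi_{2}(L)$ turns its right-hand side into $\bigcup_{k=0}^{3}i^{k}\bigl(\phi_{1}(L)\cup\phi_{2}(L)\bigr)=\bigcup_{k=0}^{3}i^{k}L$. Thus $\bigcup_{k=0}^{3}i^{k}L$ is a fixed point of the contraction $S\mapsto\tfrac{1+i}{2}S\cup K$, where $K=\bigcup_{k}i^{k}\phi_{2}(L)$ is now a known compact set, and since $A_{\emptyset}=\overline{X^{*}}$ is another, uniqueness gives $\overline{X^{*}}=\bigcup_{k=0}^{3}i^{k}L$.

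I expect the main obstacle to be the bookkeeping of the automaton step rather than any hard estimate. One must choose the states so that the revolving constraint becomes a genuine finite-state system, and in particular justify the self-loop term $\tfrac{1+i}{2}A_{\emptyset}$ arising from arbitrarily long strings of leading zeros; this is what lets the limit set be realized as the attractor of a contractive system, so that the uniqueness principle applies and yields honest set equality rather than mere almost-everywhere agreement. Once that framework is in place, the heart of the matter is the short algebraic identity $\tfrac{1+i}{2}(1-iL)=\phi_{2}(L)$, which says precisely that the reduced attractor $B$ is a rotated copy of L\'evy's curve; the remaining ingredients are the rotational symmetry $A_{s}=sA_{1}$ and the contraction principle.
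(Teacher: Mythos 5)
Your proof is correct. Strictly speaking the paper does not prove this theorem in the text --- it quotes it from \cite{Kawamura-2002}, where the original argument went through the functional equation \eqref{eq:kiko} --- but the paper's Section~1 machinery recovers it as the special case $\alpha=\tfrac{1+i}{2}$, $\theta=-\pi/2$ of Theorem~\ref{th:main1} (equivalently, conjugate the case $\alpha=\tfrac{1-i}{2}$, $\theta=\pi/2$ of Example~1.4), and your argument is in substance that specialization: derive a Hutchinson set equation by splitting on the first digit, check nonemptiness and compactness, and invoke uniqueness of the attractor. The differences are organizational. You package the state-tracking as a five-state graph-directed system and then collapse it by the rotational symmetry $A_{is}=iA_s$, whereas the paper works from the start with the single ``first nonzero digit equals $1$'' piece $X_{1,\alpha,\theta}$ and obtains the full set as $\bigcup_l (e^{i\theta})^l X_{1,\alpha,\theta}$ by a direct decomposition rather than by a second fixed-point argument for $A_{\emptyset}$; your extra contraction step $S\mapsto\tfrac{1+i}{2}S\cup K$ is valid but avoidable. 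Your compactness argument (continuous image of the closed, hence compact, admissible subshift of $\{0,1,-1,i,-i\}^{\NN}$) is actually cleaner than the paper's diagonal extraction in Lemma~\ref{lemma:X1-closed}. One small point to make explicit: the all-zero continuation must be admissible (the GRC allows finite, including empty, nonzero subsequences), since otherwise $0$ would be missing from the $A_s$ and the set equations would fail to have the stated solutions.
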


It is worth mentioning that the proof is completely different from Mizutani and Ito's approach. Instead of using technique from symbolic dynamics, Kawamura considered the following functional equation 
\begin{equation}
\label{eq:kiko}
        f_{\alpha, \gamma}(x)=
        \begin{cases}
                \alpha f_{\alpha,\gamma}(2x),
                        & \qquad 0 \leq x < 1/2, \\
                \gamma f_{\alpha,\gamma}(2x-1)+(1-\gamma),
                        & \qquad 1/2 \leq x \leq 1,
       \end{cases}
\end{equation}
where $\alpha$ and $\gamma$ are complex parameters satisfying $|\alpha|<1$, $|\gamma|<1$. She proved the existence of a unique bounded solution $f_{\alpha,\gamma}:[0,1] \to \mathbb{C}$ of \eqref{eq:kiko} and gave the explicit expression. Observe that the closure of the image of this bounded solution $f_{\alpha,\gamma}([0,1])$ is a self-similar set generated by two contractions $\phi_1(z)=\alpha z$ and $\phi_2(z)=\gamma z + (1-\gamma)$. In particular, if $\alpha=(1+i)/2$ and $\gamma=(1-i)/2$, $f_{\alpha,\gamma}([0,1])=L$.

\bigskip

L\'evy's curve and Dragon are very different: one is a continuous curve while the other is a tiling fractal; however, both are self-similar sets. Thus, the following questions arise naturally. 
\begin{enumerate}
\item Is there a generalized relationship between sets of revolving sequences and self-similar sets? In particular, we are interested in describing self-similar sets which arise from more general revolving sequences, where the $90$ degree angle of rotation is replaced with a more general angle. 
\item Is there a simpler way to prove both Mizutani-Ito and Kawamura's theorems?
\end{enumerate}

\section{Generalized Revolving Sequences}

Before stating our results, some notation need to be introduced. Let $\alpha \in \CC$ denote a complex parameter satisfying $|\alpha|<1$. Let $\theta$ be an angle with $-\pi < \theta \leq \pi$ and a rational multiple of $2 \pi$. More precisely, there are $p \in \NN, q \in \NN_{0}$ such that $|\theta|=\frac{2 \pi q}{p}$. Define 
$$\Delta_{\theta}:= \{0, 1, e^{i \theta}, e^{2i \theta}, \cdots e^{(p-1)i \theta}\}.$$

\begin{definition}
A sequence $(\delta_{1},\delta_{2},\dots)\in\Delta_{\theta}^{\NN}$ satisfies the {\em Generalized Revolving Condition (GRC)}, if the  subsequence obtained after the removal of its zero elements is a (finite or infinite) truncation of the sequence $(e^{i \theta})$. More precisely, let $(n_i):=\{n:\delta_n \neq 0\}$. Then, 
$\delta_{n_{i+1}}=e^{i \theta} \delta_{n_i}.$
\end{definition}
Notice that $\delta_n$ moves on the unit circle counterclockwise if $\theta >0$, and clockwise if $\theta < 0$. 
\bigskip

Define $W_{\theta}$ as the set of all generalized revolving sequences with parameter $\theta$: 
\begin{equation*}
W_{\theta}:=\{(\delta_1, \delta_2, \cdots) \in \Delta_{\theta}^{\NN}: (\delta_1, \delta_2, \cdots) \mbox{ satisfies the GRC}\},
\end{equation*}
and for a given $\alpha \in \CC$ such that $|\alpha|<1$, define 
\begin{equation*}
  X_{\alpha,\theta}:=\left\{ \sum_{n=1}^{\infty} \delta_{n}\alpha^{n}: (\delta_{1}, \delta_{2}, \delta_{3},\dots )\in W_{\theta} \right\}. \label{xalpha1}
\end{equation*} 

Notice that each generalized revolving sequence determines a complex number and $X_{\alpha,\theta}$ is a set of points in the complex plane. Two examples of $X_{\alpha,\theta}$ are shown in Figure \ref{figure2}. It is not hard to imagine that $X_{\alpha,\theta}$ is a union of self-similar sets; however, it is not immediately clear which iterated function system generates these self-similar sets. 
 
\begin{figure}
\begin{center}
\includegraphics[height=5cm,width=5cm, bb=0 0 400 400]{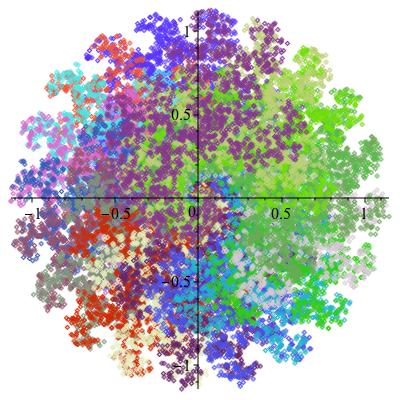} 
\qquad
\includegraphics[height=5cm,width=5cm, bb=0 0 400 400]{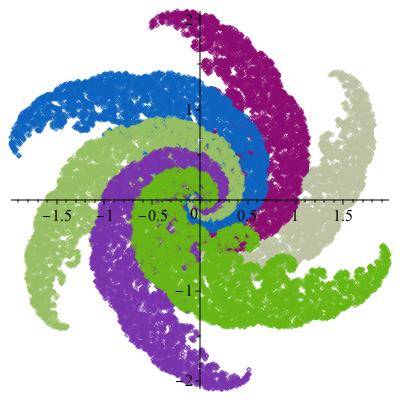}
\end{center}
\caption{$X_{\alpha,\theta}: (\alpha, \theta)=(\frac{1+i}{2}, \frac{\pi}{10})$ (left) and $(\alpha, \theta)=(\frac{1-i}{2}, \frac{\pi}{3})$ (right)}
\label{figure2}
\end{figure}
\medskip

Using a direct approach different from both \cite{Ito-1987} and \cite{Kawamura-2002}, we obtain the following theorem.

\begin{theorem}
\label{th:main1}
$X_{\alpha, \theta}$ is a union of $p$ copies of $K_{\alpha,\theta}$:
\begin{equation*}
X_{\alpha,\theta}=\bigcup_{l=0}^{p-1}(e^{i \theta})^l K_{\alpha,\theta},
\end{equation*}
where $K_{\alpha,\theta}=\psi_1(K_{\alpha,\theta})\cup\psi_2(K_{\alpha,\theta})$ is the self-similar set generated by the iterated function system (IFS): 
\begin{equation}
\label{eq:IFS1} 
\begin{cases}
 \psi_1(z)=\alpha z,& \\
 \psi_2(z)=(\alpha e^{i \theta}) z+\alpha.
\end{cases}
\end{equation}
\end{theorem}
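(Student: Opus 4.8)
The plan is to set up a dictionary between the two generators $\psi_1,\psi_2$ and elementary operations on revolving sequences, and then to recognize $K_{\alpha,\theta}$ as the \emph{unique} compact attractor of the IFS via a self-referential invariance identity. A direct consequence is the stated union decomposition.

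First I would partition $W_\theta$ according to the first nonzero entry. Since $(e^{i\theta})^p=e^{\pm 2\pi i q}=1$, every nonzero element of $\Delta_\theta$ equals $(e^{i\theta})^l$ for a unique $l\in\{0,\dots,p-1\}$; call a nonzero sequence \emph{phase}-$l$ if its first nonzero entry is $(e^{i\theta})^l$, and treat the all-zero sequence separately (it contributes $0$). If $(\delta_n)$ is phase-$l$, then writing $\delta_n=(e^{i\theta})^l\delta_n'$ exhibits $(\delta_n')$ as a phase-$0$ sequence, whence $\sum_n\delta_n\alpha^n=(e^{i\theta})^l\sum_n\delta_n'\alpha^n$. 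Letting $K$ be the set of all phase-$0$ sums together with $0$, this gives immediately
\[
X_{\alpha,\theta}=\bigcup_{l=0}^{p-1}(e^{i\theta})^l K,
\]
so the whole theorem reduces to identifying $K$ with $K_{\alpha,\theta}$.

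The heart of the argument is the set identity $K=\psi_1(K)\cup\psi_2(K)$, proved by reading off the action of each generator on sequences. The map $\psi_1(z)=\alpha z$ corresponds to prepending a $0$: if $z=\sum_n\delta_n\alpha^n$ then $\alpha z$ is the sum of $(0,\delta_1,\delta_2,\dots)$. The map $\psi_2(z)=\alpha e^{i\theta}z+\alpha$ corresponds to prepending a $1$ and advancing the phase of the remaining entries by one step: a short computation gives $\psi_2(z)=\alpha^1\cdot 1+\sum_{m\ge 2}e^{i\theta}\delta_{m-1}\alpha^m$, i.e.\ the sum of $(1,e^{i\theta}\delta_1,e^{i\theta}\delta_2,\dots)$. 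I would then check both inclusions: each operation sends phase-$0$-or-zero sequences to phase-$0$-or-zero sequences (so $\psi_1(K)\cup\psi_2(K)\subseteq K$), and conversely a phase-$0$-or-zero sequence either begins with $0$, placing it in $\psi_1(K)$, or has first nonzero entry $1$, and stripping the leading block then exhibits it in $\psi_2(K)$. The delicate bookkeeping is that the factor $e^{i\theta}$ applied to the tail in $\psi_2$ is exactly what keeps the global nonzero pattern equal to $1,e^{i\theta},e^{2i\theta},\dots$.

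Finally, to conclude $K=K_{\alpha,\theta}$ I would invoke uniqueness of the attractor of a contractive IFS (Hutchinson). Both $\psi_1,\psi_2$ are contractions of ratio $|\alpha|<1$, so the IFS has a unique nonempty compact invariant set, namely $K_{\alpha,\theta}$; it thus suffices to verify that $K$ is nonempty and compact. Nonemptiness and boundedness are immediate, since $\bigl|\sum_n\delta_n\alpha^n\bigr|\le |\alpha|/(1-|\alpha|)$, while compactness follows by exhibiting $K$ as the continuous image, under the summation map (which converges uniformly because $|\alpha|<1$), of the set of phase-$0$-or-zero sequences, which is closed in the product topology on $\Delta_{\theta}^{\NN}$ and hence compact. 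I expect the main obstacle to be precisely this combination of points: pinning down the two inclusions in $K=\psi_1(K)\cup\psi_2(K)$ with the phase shift in $\psi_2$ handled correctly, and confirming compactness of the underlying sequence space, since it is this that legitimizes appealing to uniqueness of the attractor rather than merely producing some invariant set.
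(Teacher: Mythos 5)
Your proposal is correct and follows essentially the same route as the paper: decompose $X_{\alpha,\theta}$ by the value of the first nonzero digit into rotated copies of the ``phase-$0$'' set (the paper's $X_{1,\alpha,\theta}$), establish the set equation $K=\psi_1(K)\cup\psi_2(K)$ via the prepend-$0$ and prepend-$1$-then-rotate-the-tail dictionary, and conclude by uniqueness of the Hutchinson attractor. The only cosmetic difference is that you obtain compactness of $K$ as the continuous image of a closed subset of the compact product space $\Delta_\theta^{\NN}$, whereas the paper proves closedness of $X_{1,\alpha,\theta}$ directly by a diagonal subsequence argument; these are equivalent.
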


\begin{remark}
{\rm 
Using a similar approach as in~\cite{Kawamura-2002}, Young~\cite{Young-2015} essentially found the result of Theorem~\ref{th:main1} under the RTG Undergraduate Summer Research Program; however, his proof was incomplete.
}
\end{remark} 

\begin{example}
{\rm 
Both Mizutani-Ito's and Kawamura's results are included in this setting as special cases. It is clear that $X_{\alpha,\theta}$ is a union of Dragons if $\alpha=\frac{1-i}{2}$ and $\theta=-\pi/2$. If $\alpha=\frac{1-i}{2}$ and $\theta=\pi/2$, then $X_{\alpha,\theta}$ is a union of L\'evy's curves generated by
\begin{equation}
\label{rev-levy}
\begin{cases}
 \psi_1(z)=(\frac{1-i}{2})z,& \\
 \psi_2(z)=(\frac{1+i}{2})z + \frac{1-i}{2}.&
\end{cases}
\end{equation}
}
\end{example}

Notice that \eqref{rev-levy} is different from \eqref{eq:levy}. Let $P$ be the self-similar set $P=\psi_1(P)\cup\psi_2(P)$ generated by \eqref{rev-levy}. It is clear that $\overline{L}=P$ since 
$$\overline{L}=\overline{\phi_1(L)}\cup \overline{\phi_2(L)}=\psi_1(\overline{L})\cup\psi_2(\overline{L}).$$ 

Recall the celebrated theorem of Hutchinson~\cite{Hutchinson-1981}: For any finite family of similar contractions $\psi_{1},\psi_{2},\dots,\psi_{m}$ on $\mathbb{R}^{n}$, there exists a unique self-similar set $X \subset \mathbb{R}^{n}$, which is a unique non-empty compact solution of the set equation $X= \psi_{1}(X) \cup \psi_{2}(X) \cup \dots \cup \psi_{m}(X)$. However, the converse is not true. In fact, a self-similar set can be constructed by many different families of similar contractions. 
\medskip

One of the challenges of this type of question is to find a suitable pair of contractions which matches the position of the set $X_{\alpha,\theta}$ exactly. Once the suitable pair of contractions is found, a more direct proof is possible, using the following lemma and proposition.

\medskip

Define a subset of $X_{\alpha, \theta}$ as follows.
\begin{equation}
  X_{1, \alpha,\theta}:=\left\{ \sum_{n=1}^{\infty} \delta_{n}\alpha^{n}: \delta_{j_1}=1, (\delta_{1}, \delta_{2}, \delta_{3},\dots )\in W_{\theta} \right\},
\label{eq:X1}
\end{equation}
where $j_{1}:=\min\{j: \delta_{j} \not= 0\}$.

\begin{lemma} \label{lemma:X1-closed}
$X_{1,\alpha,\theta}$ is a closed set.
\end{lemma}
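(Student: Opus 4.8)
The plan is to prove the equivalent statement that $X_{1,\alpha,\theta}$ contains all of its accumulation points, organized around one convenient decomposition. If a sequence in $W_{\theta}$ has its first non-zero entry $\delta_{j_1}=1$ at position $j_1=k$, factor out the leading power:
\[
\sum_{n=1}^{\infty}\delta_n\alpha^{n}=\alpha^{k}\sum_{m=0}^{\infty}\delta_{k+m}\alpha^{m}=\alpha^{k}w,
\]
where $w$ ranges over the auxiliary set
\[
Z:=\Big\{\sum_{m=0}^{\infty}\eta_m\alpha^{m}:(\eta_0,\eta_1,\dots)\in W_{\theta},\ \eta_0=1\Big\}.
\]
The symbol space $\{(\eta_m)\in\Delta_{\theta}^{\NN_0}:\eta_0=1,\ (\eta_m)\text{ satisfies the GRC}\}$ is a closed, hence compact, subset of the product space $\Delta_{\theta}^{\NN_0}$, and the evaluation map $(\eta_m)\mapsto\sum_m\eta_m\alpha^{m}$ is continuous because the series is dominated by $\sum_m|\alpha|^{m}$ and so converges uniformly. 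Thus $Z$ is compact, and $X_{1,\alpha,\theta}=\bigcup_{k\ge 1}\alpha^{k}Z$.

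First I would dispose of every accumulation point $z\neq0$. Suppose $z_n\to z$ with $z_n\in X_{1,\alpha,\theta}$ and $z\neq0$, and write $z_n\in\alpha^{k_n}Z$. Every $w\in Z$ satisfies $|w|\le M:=(1-|\alpha|)^{-1}$, so $|z_n|\le|\alpha|^{k_n}M$; if $k_n\to\infty$ along a subsequence then $z_n\to0$, contradicting $z\neq0$. Hence the exponents $k_n$ are bounded, and after passing to a subsequence they are constant, $k_n\equiv k$. Then $z_n\in\alpha^{k}Z$, and since $\alpha^{k}Z$ is compact we get $z\in\alpha^{k}Z\subseteq X_{1,\alpha,\theta}$.

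The one remaining accumulation point is $0$: the sequence $(0,\dots,0,1,0,0,\dots)$ with the single $1$ at position $N$ yields $\alpha^{N}\in X_{1,\alpha,\theta}$ and $\alpha^{N}\to0$, so $0\in\overline{X_{1,\alpha,\theta}}$. Consequently closedness reduces to the single membership $0\in X_{1,\alpha,\theta}$, and this is the step I expect to be the main obstacle. I would attack it through the self-referential identity the decomposition yields, obtained by separating the cases $\delta_1=1$ and $\delta_1=0$ and using $\psi_2(z)=\alpha e^{i\theta}z+\alpha$:
\[
X_{1,\alpha,\theta}=\psi_1\big(X_{1,\alpha,\theta}\big)\ \cup\ \psi_2\big(\{0\}\cup X_{1,\alpha,\theta}\big).
\]
Here $\{0\}\cup X_{1,\alpha,\theta}=\overline{X_{1,\alpha,\theta}}$ is compact and satisfies $\overline{X}=\psi_1(\overline{X})\cup\psi_2(\overline{X})$, so Hutchinson's theorem identifies it with $K_{\alpha,\theta}$.

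The $\psi_1$-branch only reproduces the circular statement $0\in X_{1,\alpha,\theta}\iff 0\in X_{1,\alpha,\theta}$, so the real content is to realize $0$ through the $\psi_2$-branch: $0\in\psi_2(K_{\alpha,\theta})$ precisely when $-e^{-i\theta}\in K_{\alpha,\theta}$, i.e.\ when $-e^{-i\theta}$ is itself a generalized revolving sum. Concretely I would construct an explicit eventually periodic revolving sequence beginning with $1$ that sums to $0$, exploiting that $e^{i\theta}$ is a $p$-th root of unity so that a full block $1,e^{i\theta},\dots,e^{(p-1)i\theta}$ is available for cancellation; equivalently, I would exhibit a coding of $0$ as a composition $\psi_{i_1}\psi_{i_2}\cdots$ beginning with $\psi_2$. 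This is exactly where the geometry of the IFS enters and where the argument is genuinely delicate, since for the parameters of interest (for instance the Dragon and L\'evy values, where $|\alpha|=1/\sqrt2$ and $K_{\alpha,\theta}$ has non-empty interior containing $0$) such a representation does exist, which is what makes $X_{1,\alpha,\theta}$ closed.
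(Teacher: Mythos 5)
Your reduction of the problem to the single membership $0\in X_{1,\alpha,\theta}$ is sound: the decomposition $X_{1,\alpha,\theta}=\bigcup_{k\ge 1}\alpha^{k}Z$ with $Z$ compact (continuous image of a compact symbol space), together with the boundedness of the exponents $k_n$ along a sequence converging to a nonzero limit, correctly disposes of every accumulation point other than $0$. (This is a genuinely different route from the paper, which runs a diagonal argument directly on the codes; your version is fine for that part.) The gap is in how you then try to settle $0\in X_{1,\alpha,\theta}$. Realizing $0$ through the $\psi_2$-branch requires $-e^{-i\theta}\in K_{\alpha,\theta}$, and this is simply false for general parameters: $K_{\alpha,\theta}$ is contained in the closed ball of radius $|\alpha|/(1-|\alpha|)$ about the origin (that ball is mapped into itself by both $\psi_1$ and $\psi_2$), and this radius is strictly less than $1=|-e^{-i\theta}|$ whenever $|\alpha|<1/2$. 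So the eventually periodic revolving sequence summing to $0$ that you propose to construct cannot exist in that regime, and your closing suggestion that closedness hinges on such a representation would make the lemma false for small $|\alpha|$ --- whereas it is asserted, and needed, for all $|\alpha|<1$.

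The missing observation is much simpler: the all-zero sequence $(0,0,0,\dots)$ belongs to $W_{\theta}$ (the GRC is vacuous when there are no nonzero entries) and satisfies the defining condition of $X_{1,\alpha,\theta}$ vacuously, since there is no first nonzero digit to constrain; it represents the point $0$. This convention is forced: under the strict reading in which a first nonzero digit must exist and equal $1$, your own computation $\alpha^{N}\to 0$ would show that $X_{1,\alpha,\theta}$ is not closed, so the lemma (and the paper's own proof, whose inductive construction may well produce the all-zero code) clearly intends $0\in X_{1,\alpha,\theta}$ by definition. With that one line added, your argument closes and stands as a valid alternative to the paper's proof.
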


\begin{proof}
Let $(x_k)$ be a sequence in $X_{1,\alpha,\theta}$, converging to some point $x$. 
For each $k \in \NN$, there exists a sequence 
$(\delta^{k}_n) \in W_{\theta}$ such that $x_k=\sum_{n=1}^{\infty} \delta^{k}_{n}\alpha^{n}$. 

It suffices to construct a sequence $(\delta_n)$ such that for each $n \geq 1$, there exists $k \in \NN$ such that $(\delta_n)$ and $(\delta^{k}_n)$ start with the same initial word of length $n$. Obviously, then $x=\sum_{n=1}^{\infty} \delta_{n}\alpha^{n} \in X_{1,\alpha,\theta}$.

A suitable sequence $(\delta_n)$ can be constructed by induction as follows. First, choose $\delta_1 \in \{0,1\}$ so that there exist infinitely many $k \in \NN$ such that $\delta^{k}_1=\delta_1$. Next, suppose an initial word $\delta_1, \delta_2, \dots, \delta_n$ has been defined for some $n \geq 1$ so that $\delta^k_1\delta^k_2\dots \delta^k_n=\delta_1\delta_2\dots \delta_n$ for infinitely many $k\in\NN$. Then choose $\delta_{n+1} \in \{0, e^{i\theta}\delta_{j_0(n)}\}$, where $j_0(n):=\max\{j \leq n: \delta_{j} \not= 0\}$, in such a way that there exist infinitely many $k\in\NN$ such that
$$\delta^k_1\delta^k_2\dots \delta^k_n\delta^k_{n+1}=\delta_1\delta_2\dots \delta_n\delta_{n+1}.$$
This gives the desired sequence $(\delta_n)$.
\end{proof}

\begin{corollary}\label{corollary:X-closed}
$X_{\alpha,\theta}$ is a closed set.
\end{corollary}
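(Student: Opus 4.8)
The plan is to deduce Corollary~\ref{corollary:X-closed} from Lemma~\ref{lemma:X1-closed} by expressing $X_{\alpha,\theta}$ as a finite union of rotated copies of the closed set $X_{1,\alpha,\theta}$, together with the single point $0$. The key observation is that any generalized revolving sequence $(\delta_n) \in W_\theta$ is either identically zero (contributing the point $0$), or has a first nonzero term $\delta_{j_1}$, whose value must be one of the $p$ elements $1, e^{i\theta}, \dots, e^{(p-1)i\theta}$ since these are the only nonzero members of $\Delta_\theta$. I would therefore partition $W_\theta$ according to the value of this leading nonzero coefficient.

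First I would verify the decomposition
\begin{equation*}
X_{\alpha,\theta}=\{0\}\cup\bigcup_{l=0}^{p-1}(e^{i\theta})^l X_{1,\alpha,\theta}.
\end{equation*}
The heart of this is the claim that multiplication by $(e^{i\theta})^l$ carries $X_{1,\alpha,\theta}$ exactly onto the set of sums $\sum_n \delta_n \alpha^n$ whose leading nonzero coefficient equals $(e^{i\theta})^l$. This follows because the GRC is invariant under multiplying an entire sequence by $(e^{i\theta})^l$: if $(\delta_n)\in W_\theta$ then $((e^{i\theta})^l\delta_n)\in W_\theta$ as well, since rotating every term by a fixed power of $e^{i\theta}$ preserves both the cyclic ordering of the nonzero terms and their membership in $\Delta_\theta$. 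Under this rotation a sequence with leading nonzero term $1$ maps bijectively to one with leading nonzero term $(e^{i\theta})^l$, and the corresponding complex values transform by the same factor $(e^{i\theta})^l$, giving $(e^{i\theta})^l X_{1,\alpha,\theta}$.

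Once the decomposition is established, the conclusion is immediate: each rotated copy $(e^{i\theta})^l X_{1,\alpha,\theta}$ is closed, being the image of the closed set $X_{1,\alpha,\theta}$ (Lemma~\ref{lemma:X1-closed}) under the homeomorphism $z\mapsto(e^{i\theta})^l z$ of $\mathbb{C}$; the singleton $\{0\}$ is closed; and a finite union of closed sets is closed. Hence $X_{\alpha,\theta}$ is closed.

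I expect the only genuine subtlety to lie in the bookkeeping of the decomposition, namely confirming that every sequence in $W_\theta$ falls into exactly one of the listed cases and that the rotation is a genuine bijection on the relevant sequence subsets rather than merely a surjection. The analytic content is entirely carried by Lemma~\ref{lemma:X1-closed}; the remainder is the elementary observation that rotation by a root of unity is a homeomorphism and that finite unions of closed sets are closed, so no further estimation or limiting argument is needed.
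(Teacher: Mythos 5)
Your proof is correct and takes essentially the same route as the paper: decompose $X_{\alpha,\theta}$ as the finite union $\bigcup_{l=0}^{p-1}(e^{i\theta})^{l}X_{1,\alpha,\theta}$ of rotated copies of the closed set from Lemma~\ref{lemma:X1-closed} and use that a finite union of closed sets is closed. Your additional care with the all-zero sequence (the point $0$, which in any case already lies in the closed set $X_{1,\alpha,\theta}$ as the limit of the points $\alpha^{N}$) and with verifying the rotation bijection merely makes explicit details the paper leaves implicit.
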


\begin{proof}
Notice that 
\begin{equation*}
X_{\alpha,\theta}=\bigcup_{l=0}^{p-1}(e^{i \theta})^l X_{1, \alpha,\theta}.
\end{equation*}
Using the fact that the union of finitely many closed set is closed, $X_{\alpha,\theta}$ is closed.
\end{proof}

\begin{proposition} \label{prop-X1}
$X_{1, \alpha,\theta}$ satisfies the set equation: 
$$X_{1, \alpha,\theta}= \psi_{1}(X_{1, \alpha,\theta}) \cup \psi_{2}(X_{1, \alpha,\theta}), $$
where $\{\psi_1,\psi_2\}$ is the IFS from \eqref{eq:IFS1}.
\end{proposition}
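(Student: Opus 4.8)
The plan is to prove the two inclusions separately by translating each similarity map into an operation on the coefficient sequences $(\delta_n)\in W_\theta$, and then checking that these sequence operations preserve both the GRC and the defining constraint $\delta_{j_1}=1$. The key observation is that the shift on sequences is what implements $\psi_1$ and $\psi_2$: multiplying a series $\sum_{n\ge 1}\delta_n\alpha^n$ by $\alpha$ reindexes it, and the affine term $+\alpha$ together with the factor $e^{i\theta}$ in $\psi_2$ exactly prepends a new leading coefficient $1$ while rotating the tail by $e^{i\theta}$.

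For the inclusion $\psi_1(X_{1,\alpha,\theta})\cup\psi_2(X_{1,\alpha,\theta})\subseteq X_{1,\alpha,\theta}$, I would take a point $z=\sum_{n\ge 1}\delta_n\alpha^n$ with $(\delta_n)\in W_\theta$ whose first nonzero coefficient is $1$, and compute both images term by term. Applying $\psi_1(z)=\alpha z$ gives $\sum_{n\ge 2}\delta_{n-1}\alpha^n$, i.e.\ it prepends a zero; the nonzero subsequence is unchanged, so the result still lies in $X_{1,\alpha,\theta}$. Applying $\psi_2(z)=(\alpha e^{i\theta})z+\alpha$ gives $\alpha+\sum_{n\ge 2}(e^{i\theta}\delta_{n-1})\alpha^n$, a sequence whose first coefficient is $1$ and whose remaining coefficients are the old ones shifted once and rotated by $e^{i\theta}$. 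Since multiplication by $e^{i\theta}$ maps $\Delta_\theta$ to itself (here I use that $p\theta$ is an integer multiple of $2\pi$, so $e^{ip\theta}=1$) and the old first nonzero value was $1$, the new nonzero subsequence reads $1,e^{i\theta},e^{2i\theta},\dots$, which again satisfies the GRC with leading value $1$.

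For the reverse inclusion $X_{1,\alpha,\theta}\subseteq\psi_1(X_{1,\alpha,\theta})\cup\psi_2(X_{1,\alpha,\theta})$, I would take $z=\sum_{n\ge 1}\delta_n\alpha^n\in X_{1,\alpha,\theta}$ and split on the value of $\delta_1$. If $\delta_1=0$, then $z=\psi_1(w)$ with $w=\sum_{n\ge 1}\delta_{n+1}\alpha^n$, and deleting the leading zero preserves both the GRC and the first-nonzero-$=1$ property, so $w\in X_{1,\alpha,\theta}$. If $\delta_1\ne 0$, the constraint $\delta_{j_1}=1$ forces $\delta_1=1$, and I would set $w=\sum_{n\ge 1}(e^{-i\theta}\delta_{n+1})\alpha^n$, the unique candidate satisfying $z=\psi_2(w)$; using $e^{-i\theta}\Delta_\theta=\Delta_\theta$ shows $w$ has coefficients in $\Delta_\theta$, and multiplying the tail (whose first nonzero value is $e^{i\theta}$) by $e^{-i\theta}$ produces a valid revolving sequence beginning with $1$, so $w\in X_{1,\alpha,\theta}$.

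The term-by-term reindexing is routine; the step needing genuine care is the bookkeeping of the GRC across the rotation by $e^{\pm i\theta}$, and in particular recognizing why restricting to $X_{1,\alpha,\theta}$ (first nonzero coefficient $=1$) rather than to all of $X_{\alpha,\theta}$ is exactly what makes $\psi_2$ close up: prepending $1$ and rotating the tail is consistent with the cyclic pattern precisely when the old leading nonzero value was $1$. One edge case remains, the point $0$, which is represented only in the limit by the all-zero sequence. Since Lemma~\ref{lemma:X1-closed} gives that $X_{1,\alpha,\theta}$ is closed and $\psi_1^n(w)=\alpha^n w\to 0$ for any $w\in X_{1,\alpha,\theta}$, we have $0\in X_{1,\alpha,\theta}$ and $0=\psi_1(0)$, so $0$ introduces no inconsistency in either inclusion.
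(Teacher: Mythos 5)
Your proof is correct and takes essentially the same approach as the paper's: both inclusions are established by translating $\psi_{1}$ and $\psi_{2}$ into shift-and-rotate operations on the coefficient sequences (prepending a $0$ for $\psi_1$, prepending a $1$ and rotating the tail by $e^{i\theta}$ for $\psi_2$, and the inverse operations for the other inclusion) and verifying that the GRC and the leading-digit condition $\delta_{j_1}=1$ are preserved. The only substantive addition is your explicit handling of the point $0$ via the all-zero sequence, a minor edge case the paper's proof passes over silently.
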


\begin{proof}
Let $x=\sum_{n=1}^{\infty}\delta_n \alpha^{n}\in X_{1, \alpha, \theta}$. If $\delta_1=0$, set $\delta_j':=\delta_{j+1}$ for $j=1,2,\dots$. Then $(\delta_j')$ satisfies the generalized revolving condition with its first nonzero digit equal to 1, so 
$$x=\alpha \sum_{j=1}^{\infty}\delta_j' \alpha^{j}\in \psi_1(X_{1, \alpha, \theta}).$$
If $\delta_1=1$, set $\delta_j':=e^{-i\theta}\delta_{j+1}$ for $j=1,2,\dots$. Since the second nonzero digit of $(\delta_n)$ is $e^{i\theta}$, the sequence $(\delta_j')$ satisfies the generalized revolving condition with its first nonzero digit equal to 1, so 
$$x=\alpha+\alpha e^{i\theta}\sum_{j=1}^\infty\delta_j' \alpha^j \in \psi_2(X_{1, \alpha, \theta}).$$
Thus $X_{1, \alpha,\theta}\subset \psi_{1}(X_{1, \alpha,\theta}) \cup \psi_{2}(X_{1, \alpha,\theta})$. 

\medskip
The reverse inclusion follows analogously. Let $x \in \psi_{1}(X_{1, \alpha,\theta}) \cup \psi_{2}(X_{1, \alpha,\theta})$. 
If $x=\alpha \sum_{n=1}^{\infty}\delta_n \alpha^{n}\in \psi_1(X_{1, \alpha, \theta})$, set 
\begin{equation*} 
\delta_{j}':=
\begin{cases}
 0,& \qquad \mbox{if } j=1, \\
 \delta_{j-1},& \qquad \mbox{if } j \geq 2.
\end{cases}
\end{equation*}
Then $(\delta_j')$ satisfies the generalized revolving condition with its first nonzero digit equal to 1, so 
$$x=\sum_{n=2}^{\infty}\delta_{n-1} \alpha^{n}=\sum_{j=1}^{\infty}\delta_j' \alpha^{j} \in X_{1, \alpha, \theta}.$$
If $x=(\alpha e^{i \theta})\sum_{n=1}^{\infty}\delta_n \alpha^{n} + \alpha \in \psi_2(X_{1, \alpha, \theta})$, set 
\begin{equation*} 
\delta_{j}':=
\begin{cases}
 1,& \qquad \mbox{if } j=1, \\
 e^{i\theta}\delta_{j-1},& \qquad \mbox{if } j \geq 2.
\end{cases}
\end{equation*}
Since the second nonzero digit of $(\delta_{j}')$ is $e^{i\theta}$, the sequence $(\delta_j')$ satisfies the generalized revolving condition with its first nonzero digit equal to 1, so 
\begin{equation*}
x=\sum_{n=2}^{\infty}(e^{i \theta} \delta_{n-1}) \alpha^{n} + \alpha=\sum_{j=1}^{\infty}\delta_{j}'\alpha^{j} \in  X_{1, \alpha, \theta}.
\end{equation*}
Thus, $\psi_{1}(X_{1, \alpha,\theta}) \cup \psi_{2}(X_{1, \alpha,\theta}) \subset X_{1, \alpha,\theta}$.
\end{proof}

\begin{proof}[Proof of Theorem \ref{th:main1}]~\\
Since the set equation $X=\psi_1(X)\cup \psi_2(X)$ has a unique nonempty compact solution, Theorem \ref{th:main1} follows immediately from Lemma~\ref{lemma:X1-closed} and Proposition~\ref{prop-X1}.
\end{proof}

\section{Signed Revolving Sequences}

Theorem \ref{th:main1} shows a direct relationship between generalized revolving sequences and self-similar sets generated by the IFS from \eqref{eq:IFS1}:
\begin{equation*} 
\begin{cases}
 \psi_1(z)=\alpha z,& \\
 \psi_2(z)=(\alpha e^{i \theta}) z+\alpha.&
\end{cases}
\end{equation*}

Many interesting self-similar sets are generated by \eqref{eq:IFS1}; however, Koch's curve, a famous self-similar set,  is not generated by \eqref{eq:IFS1} but by a different pair of two similar contractions: 
\begin{align}
\label{eq:second case}
 \begin{cases}
 \psi_1(z)=\alpha \overline{z},& \\
 \psi_2(z)=(\alpha e^{i \theta}) \overline{z}+\alpha.  &
 \end{cases}
\end{align}
In particular, if $\alpha=1/2+(\sqrt{3}/6)i, \theta=-\pi/3$, the IFS \eqref{eq:second case} generates Koch's curve.

\medskip

A reversed question arises naturally: What kind of revolving sequences are related to self-similar sets generated by the IFS \eqref{eq:second case}? More precisely,  given the attractor $K^{2}_{\alpha,\theta}$ of the IFS \eqref{eq:second case}, we want to find a suitable set of ``revolving" sequences such that the analog of the set $X_{\alpha,\theta}$ from Section 1 is  
$$\bigcup_{l=0}^{p-1}(e^{i \theta})^l K^{2}_{\alpha,\theta}.$$

Recall that $\alpha \in \CC$ is a complex parameter satisfying $|\alpha|<1$ and $\theta$ be an angle with $|\theta|=\frac{2 \pi q}{p}$ where  
$p \in \NN, q \in \NN_{0}$. The generalized revolving sequences from Section 1 always follow a fixed direction on the unit circle, depending on the given $\theta$. How does the introduction of complex conjugates in the IFS influence the corresponding type of revolving sequences?


\begin{definition} \label{def:SRC}
A sequence ${(\delta_{1},\delta_{2},\dots)\in\Delta_{\theta}^{\NN}}$ satisfies the {\em Signed Revolving Condition (SRC)}, if  
\begin{enumerate}
\item $\delta_1$ is free to choose, 
\item If $\delta_{1}=\delta_{2}=\dots =\delta_{k}=0$, then  $\delta_{k+1}$ is free to choose,
\item Otherwise, $\delta_{k+1}=0$ or \begin{align*}
\delta_{k+1}=\begin{cases} 	
(e^{+i\theta})\delta_{j_{0}(k)}, &\mbox{if $j_0(k)$ is odd},\\ 
(e^{-i\theta})\delta_{j_{0}(k)}, &\mbox{if $j_0(k)$ is even}, 
\end{cases}
\end{align*}
where $j_{0}(k):=\max \{j \le k : \delta_{j} \not= 0 \}$. 
\end{enumerate} 
\end{definition}
Roughly speaking, $j_0(k)$ is the last time before time $k$ that $\delta_j$ is on the unit circle. 

\medskip

Notice that $\delta_n$ is either zero or lies on the unit circle, and its direction of motion (that is, where it moves to at time $n$) depends on the last time $j<n$ when $\delta_j$ is on the unit circle. If the last visit to the unit circle happened at an even time, then $\delta_{n}$ moves clockwise along the circle. On the other hand, if the last visit to the unit circle happened at an even time, then $\delta_n$ moves counterclockwise along the circle. For example, 
$$0 \to 1 \to e^{-i\theta} \to 0 \to 1 \to e^{i\theta} \to 0 \to 0 \to 1 \to \ldots.$$

Compared to the generalized revolving sequences from Section 1, which always move in the same direction, we see that the direction of movement of the sequence $(\delta_n)$ depends on its past.

\bigskip

Define $W^{\pm}_{\theta}$ as the set of all signed revolving sequences with parameter $\theta$: 
$$W^{\pm}_{\theta}:=\{(\delta_{1},\delta_{2},\dots )\in \Delta_{\theta}^{\NN}:(\delta_{1},\delta_{2},\dots \delta_k, \dots)\mbox{ satisfies the SRC} \},$$ 
and for a given $\alpha \in \CC$ such that $|\alpha|<1$, define 
\begin{equation*}
  X^2_{\alpha,\theta}:=\left\{ \sum_{n=1}^{\infty} \delta_{n} \prod_{j=1}^{n}\eta_j 
: (\delta_{1}, \delta_{2},\dots )\in W^{\pm}_{\theta} \right\},
\end{equation*}
where $\eta_1=\alpha$ and $\eta_{j+1}=\overline{\eta_j}$ for $j=1,2, \dots$. Four examples of $X^2_{\alpha, \theta}$ are shown in Figure~\ref{figure3}. 

\begin{figure}
\begin{center}
\includegraphics[height=5.5cm,width=5.5cm, bb=0 0 400 400]{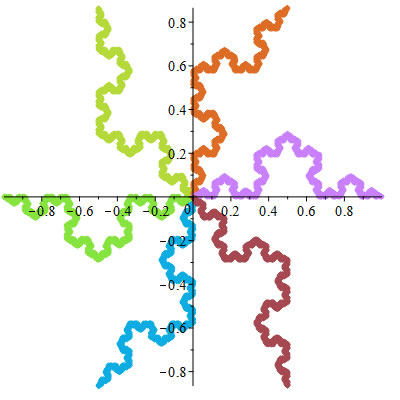} 
\qquad 
\includegraphics[height=5.5cm,width=5.5cm, bb=0 0 400 400]{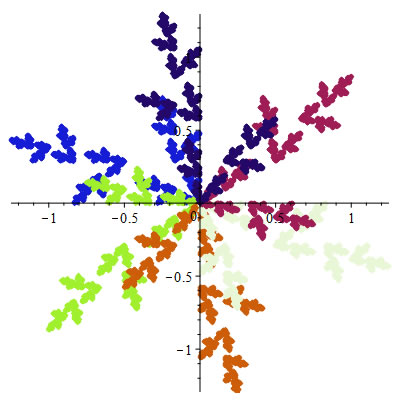} \\

\includegraphics[height=5.5cm,width=5.5cm, bb=0 0 400 400]{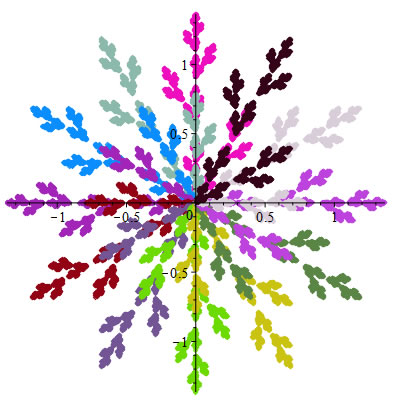} 
\qquad 
\includegraphics[height=5.5cm,width=5.5cm, bb=0 0 400 400]{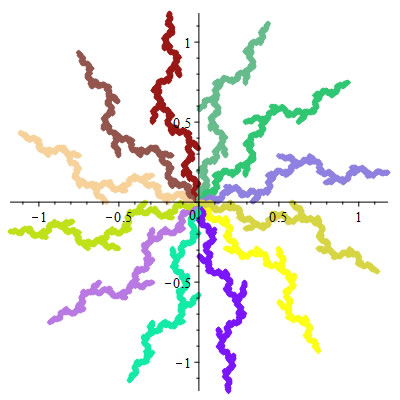}
\end{center}
\caption{$X^{2}_{\alpha,\theta}: (\alpha, \theta)=(\frac{1}{2}+\frac{\sqrt{3}i}{6}, -\frac{\pi}{3})$ (top left), $(\alpha, \theta)=(\frac{1}{2}+\frac{\sqrt{3}i}{6}, \frac{\pi}{3})$ (top right), 
$(\alpha, \theta)=(\frac{1}{2}+\frac{\sqrt{3}i}{6}, \frac{\pi}{6})$ (bottom left), $(\alpha, \theta)=(\frac{1}{2}+\frac{\sqrt{3}i}{6}, -\frac{\pi}{6})$ (bottom right)}
\label{figure3}
\end{figure}

\medskip

Let $j_{1}:=\min\{j: \delta_{j} \not= 0\}$ and define a subset of $X^{2}_{\alpha, \theta}$ as follows.
\begin{equation}
  X^{2}_{1, \alpha,\theta}=\left\{\sum_{n=1}^{\infty} \delta_{n} \prod_{j=1}^{n}\eta_j : \delta_{j_1}=1, (\delta_{1}, \delta_{2},\dots )\in W^{\pm}_{\theta} \right\}. \label{xalpha2}
\end{equation} 

A straightforward modification of the proof of Lemma~\ref{lemma:X1-closed} gives the following Lemma~\ref{lemma:X2-closed} and Corollary~\ref{corollary:X2-closed}. 

\begin{lemma} \label{lemma:X2-closed}
$X^{2}_{1,\alpha,\theta}$ is a closed set.
\end{lemma}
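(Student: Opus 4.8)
The plan is to imitate the proof of Lemma~\ref{lemma:X1-closed} line by line, with only two substantive changes: the generalized revolving condition is replaced by the signed revolving condition of Definition~\ref{def:SRC}, and the weight $\alpha^n$ is replaced by the product $\prod_{j=1}^n \eta_j$. First I would take a sequence $(x_k)$ in $X^2_{1,\alpha,\theta}$ converging to a point $x$, and for each $k$ fix a representation $x_k=\sum_{n=1}^\infty \delta^k_n \prod_{j=1}^n \eta_j$ with $(\delta^k_n)\in W^\pm_\theta$ whose first nonzero digit equals $1$. As in the earlier lemma, it then suffices to produce a single sequence $(\delta_n)\in W^\pm_\theta$ with first nonzero digit $1$ such that, for every $n$, its initial word of length $n$ agrees with that of $(\delta^k_n)$ for infinitely many $k$; the corresponding point $\sum_{n=1}^\infty \delta_n \prod_{j=1}^n \eta_j$ then lies in $X^2_{1,\alpha,\theta}$ and is forced to equal $x$.

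The sequence $(\delta_n)$ is built by the same inductive pigeonhole argument. Having fixed an initial word $\delta_1\cdots\delta_n$ matched by infinitely many indices $k$, I would choose $\delta_{n+1}$ from the set of digits admissible under the SRC so that infinitely many of those $k$ continue to agree. The essential observation — the only place where the SRC enters — is that this admissible set is finite (at most two elements) and is completely determined by the already-fixed initial word: if $\delta_1=\cdots=\delta_n=0$ it is $\{0,1\}$, and otherwise it is $\{0,\,e^{+i\theta}\delta_{j_0(n)}\}$ or $\{0,\,e^{-i\theta}\delta_{j_0(n)}\}$ according to the parity of $j_0(n)=\max\{j\le n:\delta_j\neq 0\}$. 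Since all infinitely many matching indices share the same initial word, they share the same $j_0(n)$, the same $\delta_{j_0(n)}$, and hence the same admissible set, so their $(n+1)$-st digits all fall into it and one value is attained infinitely often. This keeps $(\delta_n)$ inside $W^\pm_\theta$ and preserves the requirement that its first nonzero digit be $1$.

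The limit step is unchanged because conjugation preserves modulus: $|\eta_j|=|\alpha|$ for every $j$, so $|\prod_{j=1}^n \eta_j|=|\alpha|^n$ and any tail $\sum_{n>N}\delta_n\prod_{j=1}^n\eta_j$ is bounded in modulus by $\sum_{n>N}|\alpha|^n=|\alpha|^{N+1}/(1-|\alpha|)$, exactly as for $X_{1,\alpha,\theta}$. Thus, given $N$, I would pick a matching index $k$ that is moreover large enough that $x_k$ is within $|\alpha|^{N+1}$ of $x$; then the partial sums $\sum_{n\le N}$ of the candidate and of $x_k$ coincide, both tails are $O(|\alpha|^{N+1})$, and a triangle inequality gives $|x-\sum_{n=1}^\infty \delta_n\prod_{j=1}^n\eta_j|=O(|\alpha|^{N+1})$. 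Letting $N\to\infty$ yields $x=\sum_{n=1}^\infty \delta_n\prod_{j=1}^n\eta_j\in X^2_{1,\alpha,\theta}$.

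I do not anticipate a genuine obstacle, since the argument is structurally identical to Lemma~\ref{lemma:X1-closed}; the one point that must be checked with care is precisely that the admissibility of $\delta_{n+1}$ depends only on the already-fixed initial word and not on any later digit. This is exactly what the ``last visit'' rule of Definition~\ref{def:SRC} guarantees, so the parity bookkeeping introduced by the signed condition does not enlarge the set of continuations and does not disturb the finiteness on which the pigeonhole principle relies.
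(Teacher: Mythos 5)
Your proof is correct and follows exactly the route the paper intends: the paper gives no separate argument for this lemma, stating only that it follows by ``a straightforward modification of the proof of Lemma~\ref{lemma:X1-closed},'' and your adaptation (noting that the SRC-admissible continuations are finite and determined by the already-fixed initial word, and that $|\eta_j|=|\alpha|$ keeps the tail estimate intact) is precisely that modification, carried out with more care than the original.
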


\begin{corollary}\label{corollary:X2-closed}
$X^{2}_{\alpha,\theta}$ is a closed set.
\end{corollary}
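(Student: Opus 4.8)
The plan is to imitate the proof of Corollary~\ref{corollary:X-closed} from Section~1. First I would establish the decomposition
\begin{equation*}
X^{2}_{\alpha,\theta}=\bigcup_{l=0}^{p-1}(e^{i \theta})^l X^{2}_{1, \alpha,\theta},
\end{equation*}
exhibiting $X^{2}_{\alpha,\theta}$ as a finite union of rotated copies of $X^{2}_{1,\alpha,\theta}$. Granting this identity, the corollary follows immediately: each rotation $z \mapsto (e^{i\theta})^l z$ is a homeomorphism of $\CC$ and therefore maps the closed set $X^{2}_{1,\alpha,\theta}$ (closed by Lemma~\ref{lemma:X2-closed}) onto a closed set, and a finite union of closed sets is closed.

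The substance of the argument is thus the verification of the displayed decomposition, for which the crucial observation is that the SRC is invariant under multiplication of the entire sequence by a unit-modulus constant of the form $(e^{i\theta})^l$. Given $(\delta_n)\in W^{\pm}_\theta$ and a fixed $l\in\{0,1,\dots,p-1\}$, set $\delta_n':=(e^{i\theta})^l\delta_n$. Since $p\theta$ is an integer multiple of $2\pi$, each $\delta_n'$ again lies in $\Delta_\theta$; moreover, multiplication by the nonzero constant $(e^{i\theta})^l$ leaves the location of the zero entries unchanged, and hence leaves every index $j_0(k)$ and its parity unchanged. The defining relation $\delta_{k+1}=e^{\pm i\theta}\delta_{j_0(k)}$ therefore holds for $(\delta_n')$ exactly when it holds for $(\delta_n)$, so $(\delta_n')\in W^{\pm}_\theta$ as well.

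With this invariance in hand I would verify both inclusions. For the inclusion $\subseteq$, take $x=\sum_{n=1}^\infty \delta_n\prod_{j=1}^n\eta_j\in X^{2}_{\alpha,\theta}$ and let its first nonzero digit be $\delta_{j_1}=(e^{i\theta})^l$. Scaling the sequence by $(e^{i\theta})^{-l}$ produces a sequence in $W^{\pm}_\theta$ whose first nonzero digit equals $1$; the associated point lies in $X^{2}_{1,\alpha,\theta}$ and equals $(e^{i\theta})^{-l}x$, whence $x\in(e^{i\theta})^l X^{2}_{1,\alpha,\theta}$. For the reverse inclusion, the same invariance shows that rotating any point of $X^{2}_{1,\alpha,\theta}$ by $(e^{i\theta})^l$ yields a point whose digit sequence still satisfies the SRC, and so lies in $X^{2}_{\alpha,\theta}$.

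I expect the only delicate point to be confirming that the sign-alternation built into the SRC does not obstruct this scaling invariance. This is exactly where the $X^{2}$ setting differs from Section~1: here the direction of revolution depends on the parity of $j_0(k)$ rather than being fixed throughout. The resolution is that scaling by a unit-modulus constant acts uniformly on every digit and preserves the zero pattern, so the relevant parities---and therefore the prescribed signs $e^{\pm i\theta}$---are left untouched. Once this is checked, the argument is structurally identical to that of Corollary~\ref{corollary:X-closed}.
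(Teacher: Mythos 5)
Your proposal is correct and follows essentially the same route as the paper: the paper handles this corollary as a ``straightforward modification'' of Corollary~\ref{corollary:X-closed}, whose proof is precisely the decomposition $X^{2}_{\alpha,\theta}=\bigcup_{l=0}^{p-1}(e^{i\theta})^l X^{2}_{1,\alpha,\theta}$ combined with the fact that a finite union of closed sets is closed. Your additional verification that the SRC (and the parity of the indices $j_0(k)$) is preserved under scaling by $(e^{i\theta})^l$, together with the observation that the weights $\prod_{j=1}^{n}\eta_j$ do not depend on the digit sequence, correctly fills in the details the paper leaves implicit.
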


\begin{proposition}
\label{prop:X2-set-equation}
$X^{2}_{1, \alpha,\theta}$ satisfies the set equation 
$$X^{2}_{1, \alpha,\theta}= \psi_{1}(X^{2}_{1, \alpha,\theta}) \cup \psi_{2}(X^{2}_{1, \alpha,\theta}),$$
where $\{\psi_1,\psi_2\}$ is the IFS from \eqref{eq:second case}.
\end{proposition}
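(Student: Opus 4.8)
The plan is to mirror the proof of Proposition~\ref{prop-X1}, establishing the two inclusions separately and splitting each into the cases $\delta_1=0$ and $\delta_1=1$. The essential new algebraic ingredient is the behavior of the weights $P_n:=\prod_{j=1}^{n}\eta_j$. Since $\eta_1=\alpha$ and $\eta_{j+1}=\overline{\eta_j}$, one checks immediately that $P_n=\alpha\,\overline{P_{n-1}}$ for $n\ge 2$, and it is precisely this conjugation identity that will reproduce the complex conjugate $\overline{z}$ appearing in the maps $\psi_1(z)=\alpha\overline{z}$ and $\psi_2(z)=(\alpha e^{i\theta})\overline{z}+\alpha$ of~\eqref{eq:second case}.

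For the inclusion $X^{2}_{1,\alpha,\theta}\subset\psi_{1}(X^{2}_{1,\alpha,\theta})\cup\psi_{2}(X^{2}_{1,\alpha,\theta})$, I would take $x=\sum_{n\ge1}\delta_nP_n\in X^{2}_{1,\alpha,\theta}$ and use $P_n=\alpha\,\overline{P_{n-1}}$ to rewrite the tail $\sum_{n\ge 2}\delta_nP_n$ as $\alpha\,\overline{\sum_{m\ge1}\overline{\delta_{m+1}}\,P_m}$. If $\delta_1=0$ I set $\epsilon_m:=\overline{\delta_{m+1}}$, giving $x=\alpha\,\overline{\,\sum_m\epsilon_mP_m\,}=\psi_1\bigl(\sum_m\epsilon_mP_m\bigr)$; if $\delta_1=1$ I set $\epsilon_m:=e^{i\theta}\overline{\delta_{m+1}}$, so that $x=\alpha+(\alpha e^{i\theta})\overline{\,\sum_m\epsilon_mP_m\,}=\psi_2\bigl(\sum_m\epsilon_mP_m\bigr)$. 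In each case the first nonzero digit of $(\epsilon_m)$ equals $1$: directly in the first case, and in the second because the second nonzero digit of $(\delta_n)$ is forced to be $e^{i\theta}$ (by the SRC with $j_0=1$ odd). The reverse inclusion is the inverse construction --- prepend $\delta_1=0$ (resp.\ $\delta_1=1$) and undo the conjugation and shift --- and the required memberships $\psi_1(y),\psi_2(y)\in X^{2}_{1,\alpha,\theta}$ then follow from the same computation read backwards.

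The step I expect to be the main obstacle, and the real content of the proposition, is verifying that the shifted, conjugated (and in the second case $e^{i\theta}$-rotated) sequence $(\epsilon_m)$ genuinely satisfies the Signed Revolving Condition. The delicate point is that conjugation sends $e^{i\theta}$ to $e^{-i\theta}$, reversing the direction of revolution, while the index shift $m\mapsto m+1$ flips the parity of every position and thereby interchanges the ``$j_0$ odd'' and ``$j_0$ even'' clauses of Definition~\ref{def:SRC}. My plan is to show that these two sign reversals cancel exactly: writing $j_0'(m)$ for the last nonzero index of $(\epsilon_m)$ up to $m$, the last nonzero index of the original sequence up to $m+1$ is $j_0'(m)+1$, whose parity is opposite to that of $j_0'(m)$; substituting the original SRC relation for $\delta_{m+2}$ in terms of $\delta_{j_0'(m)+1}$ and conjugating then yields precisely the SRC relation for $\epsilon_{m+1}$ in terms of $\epsilon_{j_0'(m)}$. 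Care is needed with the free initial segment governed by rules~(1)--(2): while all prefix digits vanish there is no constraint to check, and in the $\delta_1=1$ case one must observe that position~$1$ of the original sequence is discarded by the shift, so it never serves as the reference index $j_0'$ for $(\epsilon_m)$.
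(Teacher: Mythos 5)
Your proposal is correct and follows essentially the same route as the paper: the same shift-and-conjugate substitutions ($\epsilon_m=\overline{\delta_{m+1}}$ when $\delta_1=0$ and $\epsilon_m=e^{i\theta}\overline{\delta_{m+1}}$ when $\delta_1=1$) appear there as $\delta_j'$, and the reverse inclusion is likewise handled by running the construction backwards. Your careful check that the parity flip from the index shift cancels the direction reversal caused by conjugation is exactly the point the paper asserts without detail, so your write-up is, if anything, more complete.
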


\begin{proof}
Let $x=\sum_{n=1}^{\infty}\delta_n \prod_{l=1}^{n}\eta_l \in X^{2}_{1, \alpha, \theta}$. If $\delta_1=0$, set $\delta_j':=\overline{\delta_{j+1}}$ for $j=1,2,\dots$. Then $(\delta_j')$ satisfies the signed revolving condition with its first nonzero digit equal to 1, so 
$$x=\alpha \sum_{j=1}^{\infty}\overline{\delta_j'} \prod_{l=1}^{j}\overline{\eta_l}
=\alpha\overline{\sum_{j=1}^{\infty}\delta_j'\prod_{l=1}^{j}\eta_l}\in \psi_1(X^{2}_{1, \alpha, \theta}).$$

If $\delta_1=1$, set $\delta_j':=e^{i\theta}\overline{\delta_{j+1}}$ for $j=1,2,\dots$. Since the second nonzero digit of $(\delta_n)$ is $e^{i\theta}$, the sequence $(\delta_j')$ satisfies the signed revolving condition with its first nonzero digit equal to 1, so 
$$x=\alpha+\alpha e^{i\theta}\sum_{j=1}^\infty\overline{\delta_j'} \prod_{l=1}^{j}\overline{\eta_l} \in \psi_2(X^{2}_{1, \alpha, \theta}).$$
Thus $X^{2}_{1, \alpha,\theta}\subset \psi_{1}(X^{2}_{1, \alpha,\theta}) \cup \psi_{2}(X^{2}_{1, \alpha,\theta})$. The reverse inclusion follows analogously.
\end{proof}

Since the set equation $X=\psi_1(X)\cup \psi_2(X)$ has a unique nonempty compact solution, Theorem \ref{th:main2} follows immediately from Lemma~\ref{lemma:X2-closed} and Proposition \ref{prop:X2-set-equation}.

\begin{theorem}
\label{th:main2}

Let $K^{2}_{\alpha,\theta}$ be the self-similar set generated by the IFS from \eqref{eq:second case}. Then, 
\begin{equation*}
\bigcup_{k=0}^{p-1}(e^{i \theta})^k K^{2}_{\alpha,\theta}=X^2_{\alpha, \theta}.
\end{equation*}
\end{theorem}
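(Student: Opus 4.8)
The plan is to mirror exactly the structure already established for Theorem~\ref{th:main1}. That earlier proof rested on three ingredients: a closedness lemma, a set-equation proposition, and Hutchinson's uniqueness theorem. For Theorem~\ref{th:main2} the analogous ingredients are already in hand: Lemma~\ref{lemma:X2-closed} gives closedness of $X^{2}_{1,\alpha,\theta}$, Corollary~\ref{corollary:X2-closed} extends this to $X^{2}_{\alpha,\theta}$, and Proposition~\ref{prop:X2-set-equation} establishes that $X^{2}_{1,\alpha,\theta}$ satisfies the set equation $X=\psi_1(X)\cup\psi_2(X)$ for the conjugating IFS \eqref{eq:second case}. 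So the core work is to invoke Hutchinson's theorem to identify $X^{2}_{1,\alpha,\theta}$ with the attractor $K^{2}_{\alpha,\theta}$, and then to peel off the $p$ rotated copies.

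First I would confirm the two-step logical skeleton. Since $X^{2}_{1,\alpha,\theta}$ is nonempty (it contains, for instance, the point $\alpha$ coming from $\delta_1=1$ and all subsequent digits zero), compact (closed by Lemma~\ref{lemma:X2-closed}, and bounded because $\sum|\delta_n||\prod_{j=1}^n\eta_j|\le\sum|\alpha|^n<\infty$ as $|\eta_j|=|\alpha|<1$ for all $j$), and satisfies the set equation by Proposition~\ref{prop:X2-set-equation}, Hutchinson's uniqueness theorem forces $X^{2}_{1,\alpha,\theta}=K^{2}_{\alpha,\theta}$.

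Next I would establish the rotational decomposition
\begin{equation*}
X^{2}_{\alpha,\theta}=\bigcup_{k=0}^{p-1}(e^{i\theta})^k X^{2}_{1,\alpha,\theta},
\end{equation*}
the exact analog of the identity used in Corollary~\ref{corollary:X-closed}. The point is that any signed revolving sequence has a first nonzero digit $\delta_{j_1}$, which by the SRC must lie in $\{1,e^{i\theta},\dots,e^{(p-1)i\theta}\}$; writing $\delta_{j_1}=(e^{i\theta})^k$ and factoring this scalar out reduces the sequence to one whose first nonzero digit is $1$. Care is needed here: unlike in Section~1, the value $\prod_{j=1}^n\eta_j$ alternates between $\alpha$ and $\overline{\alpha}$, so I must check that multiplying the whole series by the fixed scalar $(e^{i\theta})^k$ genuinely produces an element of $(e^{i\theta})^k X^{2}_{1,\alpha,\theta}$ and that the SRC is preserved under this rescaling of the leading digit. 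Combining the decomposition with $X^{2}_{1,\alpha,\theta}=K^{2}_{\alpha,\theta}$ yields the theorem.

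\textbf{The main obstacle} I anticipate is verifying that Proposition~\ref{prop:X2-set-equation} and the rotational decomposition interact correctly given the conjugation structure. Whereas the Section~1 maps $\psi_i$ are holomorphic, the maps in \eqref{eq:second case} involve $\overline{z}$, and the bookkeeping that sends $(\delta_n)\mapsto(\delta_n')$ in Proposition~\ref{prop:X2-set-equation} hinges on the interplay between conjugating the tail and the parity-dependent direction reversal built into the SRC. The delicate point is confirming that the parity of $j_0(k)$ (hence the clockwise/counterclockwise choice) shifts consistently when one digit is prepended or removed, so that the transformed sequence again satisfies the SRC. Since this verification is precisely what Proposition~\ref{prop:X2-set-equation} asserts and I am entitled to assume, the remaining task is only to ensure the rotational peeling respects the same parity bookkeeping; once that is checked, the proof concludes immediately by Hutchinson's theorem.
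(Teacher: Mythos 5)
Your proposal is correct and follows essentially the same route as the paper: the paper likewise deduces Theorem~\ref{th:main2} immediately from Hutchinson's uniqueness theorem combined with Lemma~\ref{lemma:X2-closed} and Proposition~\ref{prop:X2-set-equation}, with the rotational decomposition $X^{2}_{\alpha,\theta}=\bigcup_{k}(e^{i\theta})^k X^{2}_{1,\alpha,\theta}$ handled exactly as in Corollary~\ref{corollary:X-closed}. Your additional checks (nonemptiness, boundedness, invariance of the SRC under multiplying all digits by a fixed power of $e^{i\theta}$) are sound and simply make explicit what the paper leaves implicit.
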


\begin{remark}
{\rm
It is interesting to note that, while $K^{2}_{\alpha,\theta}$ is the attractor of an autonomous IFS (where the maps applied at each step do not change), its representation by a set of revolving sequences involves a rule that is past-dependent.
}
\end{remark}

\section{Alternating Sequences}

Both Propositions~\ref{prop-X1} and \ref{prop:X2-set-equation} gave more explicit description of certain self-similar sets generated by the IFS from \eqref{eq:IFS1} and \eqref{eq:second case} respectively. In these two iterated function systems, either both maps or neither involve a reflection. But what happens if exactly one of the maps includes a reflection? For example, what kind of revolving sequences are related to self-similar sets generated by the IFS
\begin{align}
\label{eq:third case}
 \begin{cases}
 \psi_1(z)=\alpha z,& \\
 \psi_2(z)=(\alpha e^{i \theta})\overline{z}+\alpha, &
 \end{cases}
\end{align}
where $\alpha \in \CC$ such that $|\alpha|<1$ and $|\theta|=|\frac{2 \pi q}{p}| \leq \pi$ ?

(Notice that the self-similar sets generated by 
\begin{align*}
 \begin{cases}
 \psi_1(z)=\alpha \overline{z},& \\
 \psi_2(z)=(\alpha e^{i \theta})z+\alpha, &
 \end{cases}
\end{align*}
are essentially the same as those generated by \eqref{eq:third case}, so this fourth case does not require separate treatment.)

As in Section 2, we want to find a suitable set of ``revolving" sequences $X^{3}_{1, \alpha,\theta}$ satisfying the set equation 
$$X^{3}_{1, \alpha,\theta}= \psi_{1}(X^{3}_{1, \alpha,\theta}) \cup \psi_{2}(X^{3}_{1, \alpha,\theta}),$$
where $\psi_{1}$ and $\psi_{2}$ are the maps in \eqref{eq:third case}. 

\medskip

Surprisingly, $X^{3}_{1, \alpha,\theta}$ is not parametrized by a set of ``revolving" sequences but by what we call ``alternating" sequences.
\begin{definition}
A sequence $(\delta_{1},\delta_{2},\dots)\in\Delta_{\theta}^{\NN}$ satisfies {\em the Alternating Condition (AC)}, if 
\begin{enumerate}
\item $\delta_1$ is free to choose, 
\item If $\delta_{1}=\delta_{2}=\dots =\delta_{k}=0$, then  $\delta_{k+1}$ is free to choose,
\item Otherwise, $\delta_{k+1}=0$ or \begin{align*}
\delta_{k+1}=\begin{cases} 	
(e^{+i\theta})\delta_{j_0(k)}, &\mbox{if $N_{j_0(k)}$ is odd},\\ 
(e^{-i\theta})\delta_{j_0(k)}, &\mbox{if $N_{j_0(k)}$ is even}, 
\end{cases}
\end{align*}
where $j_{0}(k):=\max \{j \le k: \delta_{j}\not= 0 \}$ and ${N_{j_{0}(k)}:= \# \{j \leq j_0(k):  \delta_{j} \not= 0\}.}$ 
\end{enumerate}
\end{definition}

Roughly speaking, $N_{j_{0}}(k)$ is the number of times until $j_{0}(k)$ that $\delta_j$ is on the unit circle. Notice that any $\delta_k \not= 0$ must alternate between two values on the unit circle. For example, the following sequence satisfies the AC:
 $$0 \to 0 \to 1 \to e^{i \theta} \to 0 \to 1 \to 0 \to e^{i \theta} \to 0 \to \ldots.$$

\medskip

Define $W^{A}_{\theta}$ as the set of all alternating sequences with parameter $\theta$: 
$$W^{A}_{\theta}:=\{(\delta_{1},\delta_{2},\dots )\in \Delta_{\theta}^{\NN}:(\delta_{1},\delta_{2},\dots )\mbox{ satisfies the AC} \},$$
and for a given $\alpha \in \CC$ such that $|\alpha|<1$, define 
\begin{equation*}
X^{3}_{\alpha,\theta}:=\left\{ \sum_{n=1}^{\infty} \delta_{n} \prod_{j=1}^{n}\xi_j: (\delta_{1}, \delta_{2},\dots )\in W^{A}_{\theta} \right\},
\end{equation*}
where $\xi_1=\alpha$ and 
\begin{equation} 
\xi_{j+1}=\begin{cases}
                \xi_j
                        & \qquad \mbox{ if } \delta_{j}=0, \\
                \overline{\xi_j} 
												& \qquad \mbox{ if } \delta_{j} \not= 0,
        \end{cases}
\label{eq:xi-delta-dependence}
\end{equation}
for $j > 0$. 
\medskip

Four examples of $X^3_{\alpha, \theta}$ are shown in Figure~\ref{figure4}. Notice that $X^3_{\alpha, \theta}$ has a significant difference from $X_{\alpha, \theta}$ and $X^2_{\alpha, \theta}$: the $\prod_{j=1}^{n}\xi_j$ term found in $X^3_{\alpha, \theta}$ depends on the behavior of the sequence $(\delta_1, \delta_2, \cdots, \delta_n)$, while the products in $X_{\alpha, \theta}$ and $X^2_{\alpha, \theta}$ do not depend on that sequence.

\begin{figure}[h] 
\begin{center}
\includegraphics[height=5.5cm,width=5.5cm, bb=0 0 400 400]{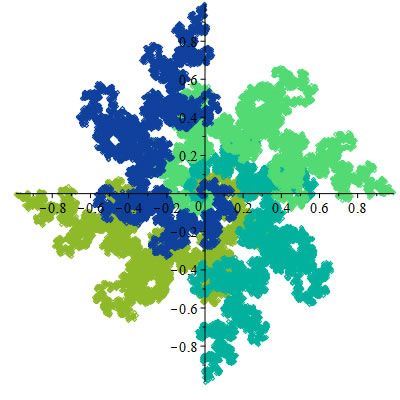} 
\qquad 
\includegraphics[height=5.5cm,width=5.5cm, bb=0 0 400 400]{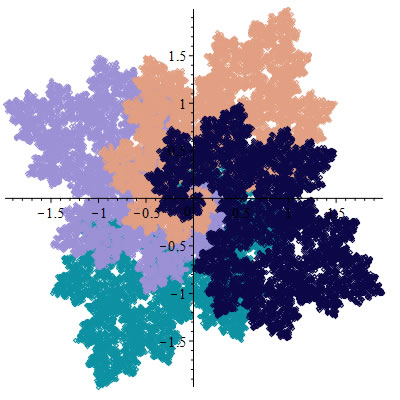} \\

\includegraphics[height=5.5cm,width=5.5cm, bb=0 0 400 400]{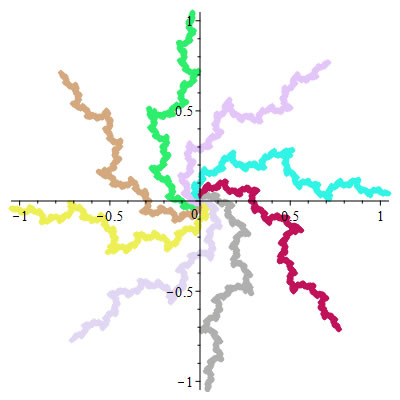} 
\qquad 
\includegraphics[height=5.5cm,width=5.5cm, bb=0 0 400 400]{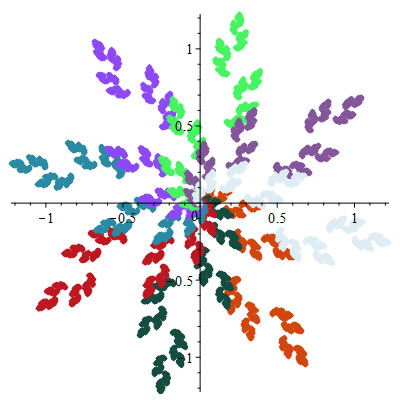}
\end{center}
\caption{$X^{3}_{\alpha,\theta}: (\alpha, \theta)=(\frac{1+i}{2}, \frac{\pi}{2})$ (top left), $(\alpha, \theta)=(\frac{1+i}{2}, -\frac{\pi}{2})$ (top right), 
$(\alpha, \theta)=(\frac{2+i}{4}, \frac{\pi}{4})$ (bottom left), $(\alpha, \theta)=(\frac{2+i}{4}, -\frac{\pi}{4})$ (bottom right)}
\label{figure4}
\end{figure}
\medskip

Let $j_1:=\min\{j: \delta_{j} \not= 0\}$ and define a subset of $X^{3}_{\alpha,\theta}$ as follows.
\begin{equation*}
  X^{3}_{1, \alpha,\theta}=\left\{\sum_{n=1}^{\infty} \delta_{n} \prod_{j=1}^{n}\xi_j : \delta_{j_1}=1, (\delta_{1}, \delta_{2},\dots )\in W^{A}_{\theta} \right\}, \label{xalpha3}
\end{equation*}

A straightforward modification of the proof of Lemma~\ref{lemma:X1-closed} gives the following Lemma~\ref{lemma:X3-closed} and Corollary~\ref{corollary:X3-closed}.

\begin{lemma} \label{lemma:X3-closed}
$X^{3}_{1,\alpha,\theta}$ is a closed set.
\end{lemma}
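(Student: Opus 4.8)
The plan is to follow the proof of Lemma~\ref{lemma:X1-closed} almost verbatim, adapting it to the past-dependent digit rule of the AC and to the variable weights $\prod_{j=1}^{n}\xi_j$. I would start by taking a sequence $(x_k)$ in $X^{3}_{1,\alpha,\theta}$ with $x_k \to x$, and for each $k$ fix a sequence $(\delta^{k}_n) \in W^{A}_{\theta}$ with first nonzero digit $1$ and $x_k=\sum_{n=1}^{\infty}\delta^{k}_n \prod_{j=1}^{n}\xi^{k}_j$. As before, it suffices to build a single sequence $(\delta_n)\in W^{A}_{\theta}$ (again with first nonzero digit $1$) such that, for every $n$, its initial word of length $n$ coincides with that of $(\delta^{k}_n)$ for infinitely many $k$; the associated point $\sum_{n=1}^{\infty}\delta_n \prod_{j=1}^{n}\xi_j$ will then lie in $X^{3}_{1,\alpha,\theta}$ and equal $x$.

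Two structural observations make the same pigeonhole/diagonal selection go through. First, for any admissible initial word $\delta_1,\dots,\delta_n$ the AC leaves only two possible continuations $\delta_{n+1}$: either $\delta_{n+1}=0$, or $\delta_{n+1}$ is the single forced value $e^{\pm i\theta}\delta_{j_0(n)}$, where the sign is dictated by the parity of $N_{j_0(n)}$. Crucially, both $j_0(n)$ and $N_{j_0(n)}$ --- and hence the forced value --- are determined by $\delta_1,\dots,\delta_n$ alone, so the finite, two-element choice set at each step depends only on the part of the sequence already constructed. Second, by~\eqref{eq:xi-delta-dependence} each $\xi_j$ is either $\xi_{j-1}$ or $\overline{\xi_{j-1}}$ with $\xi_1=\alpha$, so $|\xi_j|=|\alpha|$ for all $j$ and $\big|\prod_{j=1}^{n}\xi_j\big|=|\alpha|^{n}$; moreover $\xi_j$ is determined by $\delta_1,\dots,\delta_{j-1}$. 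Thus matching an initial word of length $n$ forces both the digits and the weights $\prod_{j=1}^{m}\xi_j$ (for $m\le n$) to agree, while the tails are uniformly bounded by $\sum_{m>n}|\alpha|^{m}$.

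With these in hand I would run the induction exactly as in Lemma~\ref{lemma:X1-closed}: choose $\delta_1\in\{0,1\}$ so that $\delta^{k}_1=\delta_1$ for infinitely many $k$, and having fixed $\delta_1,\dots,\delta_n$ matching infinitely many $(\delta^{k}_n)$, apply the pigeonhole principle to the two-element continuation set to select $\delta_{n+1}$ so that infinitely many of those $k$ still agree through length $n+1$. The resulting $(\delta_n)$ is admissible, and since each step preserves a matching $k$, for every $n$ there is a $k(n)$ whose weighted partial sum of length $n$ coincides with that of $y:=\sum_{n=1}^{\infty}\delta_n \prod_{j=1}^{n}\xi_j$; the estimate $|x_{k(n)}-y|\le 2|\alpha|^{n+1}/(1-|\alpha|)$ together with $x_k\to x$ then yields $x=y\in X^{3}_{1,\alpha,\theta}$.

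I do not expect a serious obstacle here, precisely because the weight factors all have constant modulus $|\alpha|$, so the tail estimate is identical to the generalized revolving case. The only point requiring genuine (if routine) care is confirming that the forced continuation value is a function of the already-chosen finite word, so that the two-element selection set is well defined before $\delta_{n+1}$ is committed. Once that is checked, the alternating parity rule plays no role beyond restricting each choice to two options, and the argument carries over unchanged.
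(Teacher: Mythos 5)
Your proposal is correct and is exactly the ``straightforward modification'' of the proof of Lemma~\ref{lemma:X1-closed} that the paper invokes without writing out: the same pigeonhole construction of a limiting digit sequence, with the two observations (the AC continuation set is a two-element set determined by the finite initial word, and the weights $\xi_j$ are determined by the initial word and have constant modulus $|\alpha|$) being precisely what is needed to make the adaptation go through.
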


\begin{corollary}\label{corollary:X3-closed}
$X^{3}_{\alpha,\theta}$ is a closed set.
\end{corollary}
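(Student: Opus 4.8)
The plan is to mirror the proof of Corollary~\ref{corollary:X-closed} verbatim in spirit: I would first establish the decomposition
\begin{equation*}
X^{3}_{\alpha,\theta}=\bigcup_{l=0}^{p-1}(e^{i\theta})^{l}X^{3}_{1,\alpha,\theta},
\end{equation*}
and then conclude using Lemma~\ref{lemma:X3-closed} together with the fact that a finite union of closed sets is closed. Each summand $(e^{i\theta})^{l}X^{3}_{1,\alpha,\theta}$ is the image of the closed set $X^{3}_{1,\alpha,\theta}$ under the rotation $z\mapsto(e^{i\theta})^{l}z$, a homeomorphism of $\CC$, hence closed; so the union of the $p$ such sets is closed and the corollary follows at once.

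The one step that requires genuine care — and where this case differs from Sections~1 and~2 — is the decomposition itself, because here the weights $\prod_{j=1}^{n}\xi_{j}$ depend on the digit sequence $(\delta_{n})$ through the recursion \eqref{eq:xi-delta-dependence}, whereas in Sections~1 and~2 the corresponding products were fixed. The key observation I would isolate is that \eqref{eq:xi-delta-dependence} depends on $(\delta_{n})$ \emph{only through its pattern of vanishing}: whether each $\delta_{j}$ is zero or nonzero, not through the particular nonzero value. Consequently, if two sequences have the same zero pattern they generate the identical weight sequence $(\xi_{j})$. Given $(\delta_{n})\in W^{A}_{\theta}$ whose first nonzero digit is $(e^{i\theta})^{m}$, I would set $\delta_{n}':=(e^{-i\theta})^{m}\delta_{n}$. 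Multiplying every digit by a fixed unit-modulus constant preserves the zero pattern, hence leaves $(\xi_{j})$ unchanged; it also preserves the alternating condition, since the counts $N_{j_{0}(k)}$ and the parities they determine depend only on the zero pattern, while the defining relation $\delta_{k+1}=e^{\pm i\theta}\delta_{j_{0}(k)}$ is homogeneous of degree one in the digits. Thus $(\delta_{n}')\in W^{A}_{\theta}$ has first nonzero digit $1$, and
\begin{equation*}
\sum_{n=1}^{\infty}\delta_{n}\prod_{j=1}^{n}\xi_{j}=(e^{i\theta})^{m}\sum_{n=1}^{\infty}\delta_{n}'\prod_{j=1}^{n}\xi_{j}\in(e^{i\theta})^{m}X^{3}_{1,\alpha,\theta},
\end{equation*}
which gives the inclusion $\subseteq$; running the same phase substitution backwards yields $\supseteq$.

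The main obstacle is therefore conceptual rather than computational: one must verify that the sequence-dependent weight $\prod_{j}\xi_{j}$ is \emph{invariant} under the global rotation of all digits by $(e^{i\theta})^{m}$, so that the clean factorization valid in Section~1 survives despite the feedback between $(\delta_{n})$ and $(\xi_{j})$. A small residual point is the all-zero sequence, which contributes the point $0$; since the digit strings $\delta_{N}=1$, $\delta_{n}=0$ for $n\neq N$, produce the value $\alpha^{N}\to 0$ as $N\to\infty$ with first nonzero digit $1$, the point $0$ already lies in each closed set $(e^{i\theta})^{l}X^{3}_{1,\alpha,\theta}$ by Lemma~\ref{lemma:X3-closed}, so it is absorbed without special treatment.
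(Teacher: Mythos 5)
Your proposal is correct and follows the same route as the paper's proof of Corollary~\ref{corollary:X-closed}: decompose $X^{3}_{\alpha,\theta}$ as $\bigcup_{l=0}^{p-1}(e^{i\theta})^{l}X^{3}_{1,\alpha,\theta}$ and invoke Lemma~\ref{lemma:X3-closed} together with the fact that a finite union of closed sets is closed. Your extra verification that the weights $\prod_{j}\xi_{j}$ depend only on the zero pattern of $(\delta_{n})$, and hence survive the global phase rotation of the digits, is exactly the point the paper leaves implicit, and you handle it (and the all-zero sequence) correctly.
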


\begin{proposition} \label{prop:X3-set-equation}
$X^{3}_{1, \alpha,\theta}$ satisfies the set equation 
$$X^{3}_{1, \alpha,\theta}= \psi_{1}(X^{3}_{1, \alpha,\theta}) \cup \psi_{2}(X^{3}_{1, \alpha,\theta}), $$
where $\{\psi_1, \psi_2 \}$ is the IFS from \eqref{eq:third case}.
\end{proposition}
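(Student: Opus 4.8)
The plan is to follow the two–sided inclusion argument used for Propositions~\ref{prop-X1} and~\ref{prop:X2-set-equation}, splitting according to the value of the first digit $\delta_1$. For the forward inclusion, take $x=\sum_{n=1}^{\infty}\delta_n\prod_{j=1}^{n}\xi_j\in X^{3}_{1,\alpha,\theta}$. When $\delta_1=0$ I would peel off the leading zero by setting $\delta_j':=\delta_{j+1}$; since $\xi_2=\xi_1=\alpha$ in this case, the series factors as $x=\alpha\sum_{m=1}^{\infty}\delta_m'\prod_{l=1}^{m}\xi_{l+1}$, which I would identify with $\psi_1(w)=\alpha w$ for the point $w$ represented by $(\delta_j')$. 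When $\delta_1=1$ I would instead set $\delta_j':=e^{i\theta}\overline{\delta_{j+1}}$; after conjugating and using $\xi_2=\overline{\xi_1}=\overline{\alpha}$, the identity $x-\alpha=\alpha e^{i\theta}\,\overline{w}$ should emerge, exhibiting $x=\psi_2(w)$ for the maps of~\eqref{eq:third case}.

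The step that genuinely differs from Sections~1 and~2 --- and which I expect to be the crux --- is that the weights $\xi_j$ are \emph{state-dependent}: the rule~\eqref{eq:xi-delta-dependence} conjugates $\xi$ precisely when a nonzero digit is passed. Consequently I cannot simply factor out a fixed scalar and read off the tail as before; I must verify that the transformed digit sequence $(\delta_j')$ induces, through its own recursion~\eqref{eq:xi-delta-dependence}, exactly the shifted weights appearing in the factored series. Concretely, I would prove by induction on $l$ that $\xi_l'=\xi_{l+1}$ in the $\psi_1$ case and $\xi_l'=\overline{\xi_{l+1}}$ in the $\psi_2$ case. The inductive step hinges on the observation that $\delta_l'=0\iff\delta_{l+1}=0$, so the conjugation in~\eqref{eq:xi-delta-dependence} fires at the same indices for both sequences; in the $\psi_2$ case one checks that the extra conjugation introduced by $\delta_j'=e^{i\theta}\overline{\delta_{j+1}}$ is exactly compensated by passing from $\xi_{l+1}$ to $\overline{\xi_{l+1}}$, so that the two induction hypotheses close consistently.

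Second, I would confirm that each constructed sequence lies in $W^{A}_{\theta}$ with first nonzero digit $1$. For $\psi_1$, deleting a leading zero leaves the subsequence of nonzero digits and their running counts $N$ unchanged, so the AC is inherited verbatim and the first nonzero digit is still $1$. For $\psi_2$, write the nonzero digits of the original as $d_1,d_2,\dots$, so that the AC reads $d_{k+1}=e^{+i\theta}d_k$ for $k$ odd and $d_{k+1}=e^{-i\theta}d_k$ for $k$ even. The $j$-th nonzero digit of the transformed sequence is $d_j'=e^{i\theta}\overline{d_{j+1}}$, and a direct computation gives $d_{j+1}'=\overline{\epsilon}\,d_j'$, where $\epsilon\in\{e^{i\theta},e^{-i\theta}\}$ is the original ratio at index $j+1$. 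Since conjugation interchanges $e^{i\theta}$ and $e^{-i\theta}$ while shifting the index by one interchanges the parities, the sign attached to $d_j'$ is exactly the one demanded by the AC at index $j$; moreover $d_1'=e^{i\theta}\overline{d_2}=e^{i\theta}\,\overline{e^{i\theta}d_1}=\overline{d_1}=1$, so the first nonzero digit is indeed $1$.

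The reverse inclusion is obtained by inverting each construction: for $\psi_1$ one prepends a zero, and for $\psi_2$ one prepends the digit $1$ and recovers the remaining digits via $\delta_{n}=e^{i\theta}\overline{\delta_{n-1}'}$ for $n\ge 2$, after which the same inductive bookkeeping of the weights $\xi_j$ applies. The main obstacle throughout is precisely this coupling between the digit rule and the weight rule; once the index-matching of conjugations is established, the remaining manipulations are routine and parallel to Proposition~\ref{prop:X2-set-equation}.
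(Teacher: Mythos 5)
Your proposal is correct and follows essentially the same route as the paper's proof: the same case split on $\delta_1$, the same substitutions $\delta_j'=\delta_{j+1}$ (resp.\ $\delta_j'=e^{i\theta}\overline{\delta_{j+1}}$) with shifted weights $\xi_j'=\xi_{j+1}$ (resp.\ $\xi_j'=\overline{\xi_{j+1}}$), and the analogous inversion for the reverse inclusion. The only difference is that you explicitly carry out the induction showing the transformed digit sequence regenerates the shifted weights via \eqref{eq:xi-delta-dependence} and verifies the AC with the correct parities, details the paper asserts without proof.
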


\begin{proof}
Let $x=\sum_{n=1}^{\infty}\delta_n \prod_{l=1}^{n}\xi_l \in X^{3}_{1, \alpha, \theta}$, where $(\xi_l)$ depends on $(\delta_n)$ as in \eqref{eq:xi-delta-dependence}. If $\delta_1=0$, set $\delta_j':=\delta_{j+1}$ and $\xi_j':=\xi_{j+1}$ for $j=1,2,\dots$. Then $(\delta_j')$ satisfies the alternating condition with its first nonzero digit equal to 1, and $(\xi_j')$ depends on $(\delta_j')$ as in \eqref{eq:xi-delta-dependence}, with first term $\alpha$. So 
$$x=\alpha \sum_{j=1}^{\infty}\delta_j' \prod_{l=1}^{j}\xi_l'\in \psi_1(X^{3}_{1, \alpha, \theta}).$$

If $\delta_1=1$, set $\delta_j':=e^{i\theta}\overline{\delta_{j+1}}$ and $\xi_j':=\overline{\xi_{j+1}}$ for $j=1,2,\dots$. Since the second nonzero digit of $(\delta_n)$ is $e^{i\theta}$, the sequence $(\delta_j')$ satisfies the alternating condition with its first nonzero digit equal to 1, so 
$$x=\alpha+\alpha e^{i\theta}\sum_{j=1}^\infty\ \overline{\delta_j'} \prod_{l=1}^{j}\overline{\xi_l'} \in \psi_2(X^{3}_{1, \alpha, \theta}).$$
Thus $X^{3}_{1, \alpha,\theta}\subset \psi_{1}(X^{3}_{1, \alpha,\theta}) \cup \psi_{2}(X^{3}_{1, \alpha,\theta})$. The reverse inclusion follows analogously.
\end{proof}

Since the set equation $X=\psi_1(X)\cup \psi_2(X)$ has a unique nonempty compact solution, Theorem \ref{th:main3} follows immediately from Lemma~\ref{lemma:X3-closed} and Proposition \ref{prop:X3-set-equation}.

\begin{theorem}
\label{th:main3}
Let $K^{3}_{\alpha,\theta}$ be the self-similar set generated by the IFS from \eqref{eq:third case}. Then 
\begin{equation*}
\bigcup_{k=0}^{p-1}(e^{i \theta})^k K^{3}_{\alpha,\theta}=X^3_{\alpha, \theta}.
\end{equation*}
\end{theorem}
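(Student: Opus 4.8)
The plan is to follow the same template used for Theorems~\ref{th:main1} and~\ref{th:main2}: first identify the building block $X^{3}_{1,\alpha,\theta}$ with the Hutchinson attractor $K^{3}_{\alpha,\theta}$, and then assemble the full set $X^{3}_{\alpha,\theta}$ from rotated copies of this block. The first half reduces almost entirely to machinery already in place. Both maps in \eqref{eq:third case} are contractions of ratio $|\alpha|<1$; for $\psi_2$ this uses that conjugation is an isometry, so $|\psi_2(z)-\psi_2(w)|=|\alpha e^{i\theta}|\,|\overline{z}-\overline{w}|=|\alpha|\,|z-w|$. Hence by Hutchinson's theorem the set equation $X=\psi_1(X)\cup\psi_2(X)$ has a unique nonempty compact solution, namely $K^{3}_{\alpha,\theta}$. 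Now Lemma~\ref{lemma:X3-closed} gives that $X^{3}_{1,\alpha,\theta}$ is closed; it is bounded because $|\xi_j|=|\alpha|$ for every $j$ (conjugation preserves modulus), so $\left|\sum_{n}\delta_n\prod_{j=1}^{n}\xi_j\right|\le\sum_{n}|\alpha|^{n}=\frac{|\alpha|}{1-|\alpha|}$; and it is nonempty since $\delta_1=1$ with all later digits zero yields the point $\alpha$. Thus $X^{3}_{1,\alpha,\theta}$ is nonempty and compact, and by Proposition~\ref{prop:X3-set-equation} it satisfies the set equation, so uniqueness forces $X^{3}_{1,\alpha,\theta}=K^{3}_{\alpha,\theta}$.

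The second half is to establish the decomposition $X^{3}_{\alpha,\theta}=\bigcup_{l=0}^{p-1}(e^{i\theta})^l X^{3}_{1,\alpha,\theta}$, after which the theorem follows by substituting $X^{3}_{1,\alpha,\theta}=K^{3}_{\alpha,\theta}$. Given any $(\delta_n)\in W^{A}_{\theta}$ whose first nonzero digit is $\delta_{j_1}=e^{il\theta}$, I would set $\delta_n':=e^{-il\theta}\delta_n$, so that $\delta_{j_1}'=1$. Because multiplication by the fixed phase $e^{-il\theta}$ commutes with the recursive rule defining the AC and leaves the zero/nonzero status of every digit unchanged, the sequence $(\delta_n')$ again satisfies the AC with the same positions of nonzero entries. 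This last point is the crux: by \eqref{eq:xi-delta-dependence} the product $\prod_{j=1}^{n}\xi_j$ is governed solely by the zero/nonzero pattern of $(\delta_n)$, so the $\xi$-sequence attached to $(\delta_n')$ is identical to that attached to $(\delta_n)$, and the global phase factors cleanly out of the sum, $\sum_{n}\delta_n\prod_{j=1}^{n}\xi_j=e^{il\theta}\sum_{n}\delta_n'\prod_{j=1}^{n}\xi_j$. This proves $X^{3}_{\alpha,\theta}\subseteq\bigcup_{l}(e^{i\theta})^l X^{3}_{1,\alpha,\theta}$, and the reverse inclusion is immediate, since each rotated copy arises by applying the same phase argument to an admissible choice of leading digit $e^{il\theta}$.

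I expect the main obstacle to be exactly this decomposition step, which is genuinely more delicate here than in Sections~1 and~2. In those cases the weight ($\alpha^{n}$, or $\prod\eta_j$) was fixed independently of $(\delta_n)$, so factoring out a leading rotation was automatic; here the weights $\xi_j$ are themselves determined by the digit sequence. The argument succeeds only because the $\xi_j$ depend on $(\delta_n)$ in a rotation-invariant manner---through the \emph{occurrence pattern} of nonzero digits rather than their actual values---so that imposing a uniform phase on the digits leaves the weights untouched. Once this invariance is isolated, the conclusion is a direct combination of the Hutchinson identification with the decomposition, exactly parallel to the proofs of Theorems~\ref{th:main1} and~\ref{th:main2}.
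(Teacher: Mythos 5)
Your proposal is correct and follows essentially the same route as the paper: identify $X^{3}_{1,\alpha,\theta}$ with the unique nonempty compact solution of the Hutchinson equation via Lemma~\ref{lemma:X3-closed} and Proposition~\ref{prop:X3-set-equation}, then rotate. The paper states this in one line and leaves implicit both the compactness check and the decomposition $X^{3}_{\alpha,\theta}=\bigcup_{l}(e^{i\theta})^{l}X^{3}_{1,\alpha,\theta}$; your observation that the latter works because the weights $\xi_j$ depend only on the zero/nonzero pattern of $(\delta_n)$ is exactly the point that needs (and gets) justification.
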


\begin{remark}
{\rm We originally found the results of Theorems~\ref{th:main1}, \ref{th:main2} and \ref{th:main3} using a technique similar to the one in \cite{Kawamura-2002} (considering the unique solution of a functional equation analogous to \eqref{eq:kiko}). However, to avoid unnecessary technicalities, we have chosen to include only the more direct proofs using Propositions \ref{prop-X1}, \ref{prop:X2-set-equation} and \ref{prop:X3-set-equation}. For the details of the original proofs, see \cite{Kawamura-2019}.
}
\end{remark}

\section*{Acknowledgment}

This research was done mainly during a visit to Utrecht University, Netherlands.
The first author is grateful to Prof.~K.~Dajani for her warm hospitality and encouragement.
Also, we appreciate Prof.~A.~Vince for pointing out the simpler proof of Theorem~\ref{th:main1} and a referee for providing us a proof of 
Lemma~\ref{lemma:X1-closed}. Lastly, we greatly appreciate Prof.~P.~Allaart for his helpful comments and suggestions in preparing this paper. 

\footnotesize


\begin{thebibliography}{10}

\bibitem{Davis+Knuth-1970}
{\sc C.~Davis} and {\sc D.~E.~Knuth}\@, {Number representations and dragon curves I}, {\em J.\ Recreational Math.},\ {\bf 3}, pp.~66-81 \ (1970).


\bibitem{Falconer-2003}
{\sc K.~J.~Falconer}\@, {\it Fractal Geometry. Mathematical Foundations and Applications}, 3rd Edition, Wiley (2014)

\bibitem{Hutchinson-1981}
{\sc J.~E. Hutchinson}\@, {Fractals and self-similarity}, {\em Indiana Univ.\ Math.\ J.}, {\bf 30}, pp.~713--747 \ (1981).

\bibitem{Kawamura-2002}
{\sc K.~Kawamura}\@, {On the classification of self-similar sets determined by two contractions on the plane}, {\em J.\ Math.\ Kyoto Univ.}\ {\bf 42}, pp.~255-286 \ (2002). 

\bibitem{Kawamura-2019}
{\sc K.~Kawamura}\@ and {\sc A.~Allen}\@, {Revolving Fractals}, Preprint (2019) https://arxiv.org/abs/1905.05924v1.

\bibitem{Levy-1939}
{\sc P.~L\'evy}\@, {Les courbes planes ou gauches et les surfaces compos\'ees de parties semblables au tout.}, {\em J. Ecole Polytechn.}, pp.~227--247 \ (1938).


\bibitem{Ito-1987}
{\sc M.~Mizutani}\@ and {\sc S.~Ito}\@, {Dynamical Systems on Dragon Domains}, {\em Japan J.\ Appl.\ Math.},\ {\bf 4}, pp.~23-46 \ (1987).

\bibitem{Young-2015}
{\sc A. Young}\@, {\it Unpublished manuscript} (2015)

\end{thebibliography}
\end{document}